\theoremstyle{plain}
\newtheorem{theorem}{Theorem}
\newtheorem{cor}{Corollary}
\newtheorem*{main1}{Transitivity Lemma}
\newtheorem{lemma}{Lemma}
\newtheorem{proposition}{Proposition}
\newtheorem{fact}{Fact}
\title[Galois Connections for External Operations and Constraints]
{On Galois Connections between External Operations and Relational Constraints: Arity Restrictions and 
Operator Decompositions}
\author[Miguel Couceiro]{Miguel Couceiro}
\date{February, 2005}
\address{Department of Mathematics, Statistics and Philosophy\\
University of Tampere\\ 
Kalevantie 4, 33014 Tampere, Finland}
\email{Miguel.Couceiro@uta.fi}
\keywords{Galois connections, external operations, homomorphisms, function classes, class composition, relations, constraints, preservation,
 constraint satisfaction, minors, factorizations, operator decompositions}
\subjclass[2000]{08A02}
\thanks{The author wishes to thank Stephan Foldes for useful comments and remarks
 which contributed to improve the present manuscript.}
\thanks{Partially supported by the Graduate School in Mathematical Logic MALJA.
Supported in part by grant $\#$28139 from the Academy of Finland}
\begin{document}

\begin{abstract} We study the basic Galois connection  
induced by the "satisfaction" relation between external 
operations $A^n\rightarrow B$ defined on a set $A$
 and valued in a possibly different set $B$ on the one hand, and ordered pairs $(R,S)$ of relations 
$R\subseteq A^m$ and $S\subseteq B^m$, called relational constraints, on the other hand.
We decompose the closure maps associated with this Galois
connection, in terms of closure operators corresponding to simple closure conditions describing 
the corresponding Galois closed sets of functions and constraints.
 We consider further Galois correspondences by
restricting the sets of primal and dual objects to fixed arities. We describe the restricted 
Galois closure systems by means of parametrized analogues of the simpler closure conditions, and
present factorizations of the corresponding
Galois closure maps, similar to those provided in the unrestricted case.
\end{abstract}

\maketitle

\section{Introduction}

In this paper we analyse the basic Galois connection implicit in [CF1] which extends to the infinite case
the framework of Pippenger in [Pi2], where classes of external operations 
(i.e. functions defined on a set $A$
 and valued in a possibly different set $B$) are defined by the ordered pairs of 
relations, called \emph{relational constraints}, which they satisfy, and dually where sets of constraints
are characterized by the functions satisfying them. 
As presented in [CF1], the results in this bi-sorted
framework specialize to those concerning the fundamental Galois correspondence ${\bf Pol}-{\bf Inv}$ 
 between operations and relations (for finite underlying sets, see [BKKR,G,PK],
 and [Sz,P\" o1,P\" o2], for arbitrary sets).
In analogy with the universal algebra setting, we consider further Galois connections arising from the 
restriction of the sets of functions and constraints to fixed arities (see e.g. [P\" o1] and [P\" o2]).

In Section 2, we recall basic concepts and terminology, and introduce the fundamental Galois connection 
between external operations (functions) and relational constraints. 
The Galois closed sets with respect to this correspodence
are described in Section 3 by means of simple closure conditions provided in [CF1]. Also we define operators associated
with these conditions, and present factorizations of the closure maps associated with this Galois connection,
 analogous to those given in [P\" o2].
In Section 4, we study further Galois correspondences induced by the
 restiction of the sets of primal and dual objects
 to fixed arities. To characterize the corresponding Galois closed sets of functions and constraints,
 we define
parametrized analogues of the simple conditions and corresponding closure operators, given in Section 3, and represent the
restricted Galois closure maps as compositions of these simpler closure operators.

\section{Basic Notions and Terminology}

Let $A$, $B$ and $E$ be arbitrary non-empty sets. 
 A \emph{$B$-valued function on $A$} (or, \emph{external operation})
is a map $f: A^n \rightarrow B$,
 for some positive integer $n$ called the \emph{arity} of $f$. For each positive integer $n$, we denote by 
$\bf n$ the set ${\bf n}=\{1,\ldots ,n\}$, so that the $n$-tuples ${\bf a}=(a_1,\ldots ,a_n)\in A^n$ can
 be thought of as unary $A$-valued functions ${\bf a}: {\bf n}\rightarrow A$ on $\bf n$ defined by 
${\bf a}(i)=a_i$.  
A \emph{class} of $B$-valued functions on $A$ is a subset $\mathcal{F}\subseteq \cup _{n\geq 1}B^{A^n}$.
For $A=B$, $A$-valued functions on $A$ are usually called
(\emph{internal}) \emph{operations on $A$}.
 For each positive integer $n$, the $n$-ary operations $(a_t\mid t\in {\bf n})\mapsto a_i$, 
$i\in {\bf n}$, 
are called \emph{projections}.  
 The \emph{composition} of an $n$-ary $E$-valued function $f$ on $B$ with $m$-ary $B$-valued functions 
$g_1,\ldots ,g_n$ on $A$ is the $m$-ary $E$-valued function $f(g_1,\ldots ,g_n)$ on $A$, defined by  
\begin{displaymath}
f(g_1,\ldots ,g_n)({\bf a})=f(g_1({\bf a}),\ldots ,g_n({\bf a})) 
\end{displaymath}
for every ${\bf a}\in A^m$. Composition is naturally extended to classes of functions.
 For $\mathcal{I}\subseteq \cup _{n\geq 1}E^{B^n}$ and $\mathcal{J}\subseteq \cup _{n\geq 1}B^{A^n}$,
 the \emph{composition of} $\mathcal{I}$ \emph{with} $\mathcal{J}$,
 denoted $\mathcal{I}\mathcal{J}$, is defined by
 \begin{displaymath}
 \mathcal{I}\mathcal{J}=\{f(g_1,\ldots ,g_n)\mid n,m\geq 1, f\textrm{ $n$-ary in $\mathcal{I}$, }g_1,\ldots ,g_n\textrm{ $m$-ary in $\mathcal{J}$} \}. 
\end{displaymath}
Note that for arbitrary non-empty sets $A$, $B$, $E$ and $G$, and function classes
$\mathcal{I}\subseteq \cup _{n\geq 1}G^{E^n}$, $\mathcal{J}\subseteq \cup _{n\geq 1}E^{B^n}$, and
$\mathcal{K}\subseteq \cup _{n\geq 1}B^{A^n}$, we have 
 $(\mathcal{I}\mathcal{J})\mathcal{K}\subseteq \mathcal{I}(\mathcal{J}\mathcal{K})$.
 (For background on class composition see [CF2], [CF3], and [CFL] in the Boolean case $A=B=\{0,1\}$.)

A \emph{clone} on $A$ is a class $\mathcal{C}\subseteq \cup _{n\geq 1}A^{A^n}$ of operations on $A$ 
 containing all projections, and satisfying $\mathcal{C} \mathcal{C}=\mathcal{C}$. 
We denote by $\mathcal{O}_A$ the smallest clone on $A$ containing only projection maps.
  
 \smallskip

For a positive integer $m$, an $m$\emph{-ary relation on $A$} is a subset $R$ of $A^m$, i.e
 a class of unary $A$-valued functions ${\bf a}: {\bf m}\rightarrow A$ defined on $\bf m$.
We use $=_A$ to denote the binary equality relation on a set $A$. 
 For an $n$-ary function $f\in B^{A^n}$ we denote by $fR$ the class composition
 \begin{displaymath}
 \{f\}R= \{f({\bf a}^1\ldots {\bf a}^n)\mid {\bf a}^1,\ldots ,{\bf a}^n \in R\}
\end{displaymath} 
In the particular case $A=B$, if $fR\subseteq R$, then $f$ is said to \emph{preserve} $R$.

\smallskip

An $m$-ary \emph{$A$-to-$B$ relational constraint} (or simply, $m$-ary \emph{constraint})
  is an ordered pair $(R,S)$ where $R\subseteq A^m$ and $S\subseteq B^m$ are called 
the \emph{antecedent} and \emph{consequent}, respectively,
 of the constraint (see [Pi2] and [CF1]).
 A $B$-valued function on $A$, 
 $f:A^n\rightarrow B$, $n\geq 1$, is said to \emph{satisfy} an $m$-ary $A$-to-$B$ constraint $(R,S)$ 
if $fR\subseteq S$. In other words, the function $f:A^n\rightarrow B$ satisfies the constraint $(R,S)$
if and only if $f$ is a homomorphism from the relational structure 
${\bf A}^n= \langle A^n,R^n \rangle $ to the relational structure  
${\bf B}= \langle B,S \rangle $. 
Note that every $B$-valued function on $A$ satisfies the 
\emph{binary $A$-to-$B$ equality constraint} $(=_A,=_B)$, the \emph{empty constraint}
 $(\emptyset ,\emptyset )$, and, for each $m\geq 1$, the \emph{trivial constraint} $(A^m,B^m)$. 

For a set $\mathcal{T}$ of $A$-to-$B$ constraints, we denote by ${\bf FSC}(\mathcal{T})$ 
 the class of all $B$-valued functions on $A$ satisfying every member of $\mathcal{T}$. Dually, for a class $\mathcal{K}$ 
of $B$-valued functions on $A$, we denote by ${\bf CSF}( \mathcal{K})$ the
set of all $A$-to-$B$ constraints satisfied by every member of $\mathcal{K}$.
The notation ${\bf FSC}$ stands for ``functions satisfying constraints", 
while ${\bf CSF}$ stands for ``constraints satisfied by functions"
Consider the mappings ${\bf FSC}:\mathcal{T}\mapsto {\bf FSC}(\mathcal{T})$ and  
${\bf CSF}:\mathcal{K}\mapsto {\bf CSF}(\mathcal{K})$.    
By definition it follows that 
\begin{itemize}
\item[$(i)$] ${\bf FSC}$ and ${\bf CSF}$ are order reversing, i.e. if $\mathcal{T}\subseteq \mathcal{T}'$
 and $\mathcal{K}\subseteq \mathcal{K}'$,
 then ${\bf FSC}(\mathcal{T}')\subseteq {\bf FSC}(\mathcal{T})$) and
 ${\bf CSF}(\mathcal{K}')\subseteq {\bf CSF}(\mathcal{K})$, and 
 \item[$(ii)$] the compositions ${\bf FSC}\circ {\bf CSF}$ and ${\bf CSF}\circ {\bf FSC}$
 are extensive maps, i.e.
$ \mathcal{K}\subseteq {\bf FSC}({\bf CSF}(\mathcal{K})) $ and 
$\mathcal{T}\subseteq {\bf CSF}({\bf FSC}( \mathcal{T}))$.
\end{itemize}
Thus, the pair ${\bf FSC}-{\bf CSF}$ constitutes a Galois connection between external functions
 and relational constraints, and as a consequence we have
\begin{itemize}
\item[$(a)$] ${\bf FSC}\circ {\bf CSF}\circ {\bf FSC}={\bf FSC}$ and 
${\bf CSF}\circ {\bf FSC}\circ {\bf CSF}={\bf CSF}$, and 
\item[$(b)$] ${\bf FSC}\circ {\bf CSF}$  and ${\bf CSF}\circ {\bf FSC}$ are closure operators, i.e.
extensive, monotone and idempotent. 
\end{itemize}
The function classes and the sets of constraints fixed by the operators in $(b)$ are said to be
(\emph{Galois}) \emph{closed}.
 (For background on Galois connections, see e.g. [O] and [Pi1].)

\section{The Galois Connection ${\bf FSC}-{\bf CSF}$ }

In this section we recall basic theory in [CF1], and develop some factorization results 
for the composites ${\bf FSC}\circ {\bf CSF}$ and ${\bf CSF}\circ {\bf FSC}$.

 \smallskip

A class $\mathcal{K}\subseteq \cup _{n\geq 1}B^{A^n}$ of $B$-valued functions on $A$ 
 is said to be \emph{definable} (or \emph{defined}) by a set $\mathcal{T}$ of $A$-to-$B$ constraints,
 if $\mathcal{K}={\bf FSC}(\mathcal{T})$. Dually, a set $\mathcal{T}$ of $A$-to-$B$ constraints
is said to be \emph{characterized} by a set $\mathcal{K}$ of $B$-valued functions on $A$ 
if $\mathcal{T}={\bf CSF}(\mathcal{K})$. 
Thus the closed sets of functions and the closed sets of relational constraints
with respect to the Galois connection ${\bf FSC}-{\bf CSF}$ are precisely 
the classes of functions definable by constraints, and the sets of constraints characterized by functions. 
 
 \smallskip

In the case of finite underlying sets $A$ and $B$, Pippenger determined, in [Pi2], that the necessary
and sufficient conditions for a class of functions to be definable by a set of relational constraints
  are essentially closure under certain functional compositions.
An $m$-ary $B$-valued function $g$ on $A$ 
is said to be obtained from an $n$-ary $B$-valued function $f$ on $A$
 by \emph{simple variable substitution},
if there are $m$-ary projections $p_1,\ldots ,p_n\in \mathcal{O}_A$ such that 
$g=f(p_1,\ldots ,p_n)$. 
 A class $\mathcal{K}$ of $B$-valued functions on $A$ is said to be 
\emph{closed under simple variable substitutions}
if each function obtained from a function $f$ in $\mathcal{K}$ by simple variable substitution
 is also in $\mathcal{K}$, i.e. if $\mathcal{K}=\mathcal{K}\mathcal{O}_A$, where $\mathcal{O}_A$ 
denotes the smallest clone on $A$ containing only projections.
 For a class $\mathcal{K}$ of $B$-valued functions on $A$,
 we define the closure ${\bf VS}(\mathcal{K})$
of $\mathcal{K}$ under ``variable substitutions" by ${\bf VS}(\mathcal{K})=\mathcal{K}\mathcal{O}_A$.
This is indeed the smallest class containing $\mathcal{K}$ and closed under simple variable substitutions. 
Clearly, the map $\mathcal{K}\mapsto {\bf VS}(\mathcal{K})$ is extensive and monotone,
 and for any class $\mathcal{K}$, we have  
 \begin{displaymath}
{\bf VS}({\bf VS}(\mathcal{K}))= (\mathcal{K} \mathcal{O}_A)\mathcal{O}_A\subseteq \mathcal{K} (\mathcal{O}_A\mathcal{O}_A)=
\mathcal{K} \mathcal{O}_A={\bf VS}(\mathcal{K}), 
\end{displaymath}
 i.e. $\mathcal{K}\mapsto {\bf VS}(\mathcal{K})$ is also idempotent.

\begin{fact} The operator $\mathcal{K}\mapsto {\bf VS}(\mathcal{K})$ is a closure operator on 
$\cup _{n\geq 1}B^{A^n}$.
\end{fact}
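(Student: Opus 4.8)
The plan is to verify directly the three defining properties of a closure operator — extensivity, monotonicity, and idempotency — since the preceding discussion has already assembled most of the needed ingredients.

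For extensivity, I would observe that any $n$-ary function $f\in\mathcal{K}$ satisfies $f=f(p_1,\ldots,p_n)$, where $p_1,\ldots,p_n$ denote the $n$-ary projections; since these projections belong to $\mathcal{O}_A$, the function $f$ lies in the composite $\mathcal{K}\mathcal{O}_A={\bf VS}(\mathcal{K})$, whence $\mathcal{K}\subseteq{\bf VS}(\mathcal{K})$. Monotonicity is immediate from the definition of class composition: if $\mathcal{K}\subseteq\mathcal{K}'$, then every function of the form $f(g_1,\ldots,g_n)$ with $f$ in $\mathcal{K}$ and $g_1,\ldots,g_n$ in $\mathcal{O}_A$ is also of this form with $f$ in $\mathcal{K}'$, so $\mathcal{K}\mathcal{O}_A\subseteq\mathcal{K}'\mathcal{O}_A$, i.e. ${\bf VS}(\mathcal{K})\subseteq{\bf VS}(\mathcal{K}')$.

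For idempotency, the inclusion ${\bf VS}({\bf VS}(\mathcal{K}))\subseteq{\bf VS}(\mathcal{K})$ is precisely the chain displayed above: it rests on the general inclusion $(\mathcal{I}\mathcal{J})\mathcal{K}\subseteq\mathcal{I}(\mathcal{J}\mathcal{K})$ for class composition recorded in Section 2, together with the fact that $\mathcal{O}_A$ is a clone and hence satisfies $\mathcal{O}_A\mathcal{O}_A=\mathcal{O}_A$. The reverse inclusion ${\bf VS}(\mathcal{K})\subseteq{\bf VS}({\bf VS}(\mathcal{K}))$ requires no separate argument, being simply extensivity applied to the class ${\bf VS}(\mathcal{K})$ in place of $\mathcal{K}$.

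There is essentially no genuine obstacle here; the statement is a bookkeeping consequence of the definitions. The one point deserving attention is that class composition is associative only up to inclusion, not equality, so one must check that the single inclusion actually invoked — namely $(\mathcal{K}\mathcal{O}_A)\mathcal{O}_A\subseteq\mathcal{K}(\mathcal{O}_A\mathcal{O}_A)$ — points in the direction needed for idempotency, which it does.
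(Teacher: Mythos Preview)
Your proof is correct and follows the same approach as the paper: the paper simply asserts that extensivity and monotonicity are clear, and then displays the chain ${\bf VS}({\bf VS}(\mathcal{K}))=(\mathcal{K}\mathcal{O}_A)\mathcal{O}_A\subseteq\mathcal{K}(\mathcal{O}_A\mathcal{O}_A)=\mathcal{K}\mathcal{O}_A={\bf VS}(\mathcal{K})$ for idempotency, which is exactly what you do with slightly more explicit justification of the first two properties.
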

 
As shown in [CF1], in the general case of arbitrary underlying sets $A$ and $B$,
 the above closure does not suffice to guarantee
function class definability by relational constraints; "local closure" is required on 
the class of functions. A class $\mathcal{K}\subseteq \cup _{n\geq 1}B^{A^n}$ is said to be 
\emph{locally closed} if it contains every function for which every restriction to a finite subset of its
 domain $A^n$ coincides with a restriction of some member of $\mathcal{K}$.
 For background on the analogous concept defined on sets of operations, see e.g. [G,P\" o1,P\" o2]. 
For any class of functions $\mathcal{K}\subseteq \cup _{n\geq 1}B^{A^n}$
 we denote by ${\bf Lo}(\mathcal{K})$
 the smallest locally closed class of functions containing $\mathcal{K}$, called the \emph{local closure}
of $\mathcal{K}$. In other words, ${\bf Lo}(\mathcal{K})$ is the class of functions obtained from 
$\mathcal{K}$ by adding all those functions whose restriction to each finite subset of its
 domain $A^n$ coincides with a restriction of some member of $\mathcal{K}$.

\begin{fact} The operator $\mathcal{K}\mapsto {\bf Lo}(\mathcal{K})$ is a closure operator on 
$\cup _{n\geq 1}B^{A^n}$.
\end{fact}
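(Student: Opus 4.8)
The plan is to verify the three defining properties of a closure operator — extensivity, monotonicity, and idempotency — for $\mathcal{K}\mapsto {\bf Lo}(\mathcal{K})$. I would work with the explicit one-step description from the text: let $\overline{\mathcal{K}}$ denote the class of all functions $f$, of any arity $n$, whose restriction $f|_D$ to every finite $D\subseteq A^n$ agrees with $g|_D$ for some $n$-ary $g\in\mathcal{K}$. The goal is to show that $\mathcal{K}\mapsto\overline{\mathcal{K}}$ is a closure operator and that $\overline{\mathcal{K}}={\bf Lo}(\mathcal{K})$. As a preliminary I would record that ${\bf Lo}(\mathcal{K})$ is well-defined: the whole space $\cup_{n\geq 1}B^{A^n}$ is vacuously locally closed, and an arbitrary intersection of locally closed classes is again locally closed (a short check), so a smallest locally closed class containing $\mathcal{K}$ exists. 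Throughout I would be careful to keep arities matched, since the approximating member $g$ must have the same arity as $f$ for the two restrictions to share a domain.

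Extensivity and monotonicity are routine. For $\mathcal{K}\subseteq\overline{\mathcal{K}}$, any $f\in\mathcal{K}$ witnesses agreement with its own restriction on every finite $D$. For monotonicity, if $\mathcal{K}\subseteq\mathcal{K}'$ and $f\in\overline{\mathcal{K}}$, then each $f|_D$ already agrees with a restriction of a member of $\mathcal{K}\subseteq\mathcal{K}'$, so $f\in\overline{\mathcal{K}'}$. Each of these takes only a line once the notation for finite restrictions is fixed.

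The crux, and the only place I expect any subtlety, is idempotency $\overline{\overline{\mathcal{K}}}\subseteq\overline{\mathcal{K}}$ (the reverse inclusion being extensivity). The key is a transitivity of finite agreement carried out one finite set at a time. Given $f\in\overline{\overline{\mathcal{K}}}$ of arity $n$ and a finite $D\subseteq A^n$, I would first pick $h\in\overline{\mathcal{K}}$, of arity $n$, with $h|_D=f|_D$, and then, applying the definition of $\overline{\mathcal{K}}$ to this same $D$, pick $g\in\mathcal{K}$ with $g|_D=h|_D$; chaining gives $g|_D=f|_D$, so $f$ satisfies the defining condition at $D$, and since $D$ was arbitrary, $f\in\overline{\mathcal{K}}$. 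The one subtle point is that the approximants $h$ and $g$ are allowed to depend on $D$: no single global function need approximate $f$ everywhere, and the argument succeeds precisely because it passes through one intermediate $h$ per finite set rather than through a uniform one. Finally, idempotency shows $\overline{\mathcal{K}}$ is a fixed point of $\overline{(\cdot)}$, hence locally closed; combined with extensivity and minimality this identifies $\overline{\mathcal{K}}$ with the smallest locally closed class containing $\mathcal{K}$, namely ${\bf Lo}(\mathcal{K})$, and completes the proof that ${\bf Lo}$ is a closure operator.
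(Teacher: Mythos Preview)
Your proposal is correct. The paper, however, offers no proof of this Fact: it states the explicit one-step description of ${\bf Lo}(\mathcal{K})$ in the sentence preceding the Fact and then asserts the closure-operator property without further justification, treating it as immediate. Your write-up supplies exactly the routine verification the paper elides; in particular, your idempotency argument (choosing an intermediate approximant $h\in\overline{\mathcal{K}}$ per finite set $D$, then passing to $g\in\mathcal{K}$ for that same $D$) and your identification of $\overline{\mathcal{K}}$ with the minimal locally closed superclass are the natural details, and nothing more is needed.
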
 Note that, if $A$ is finite, then ${\bf Lo}(\mathcal{K})=\mathcal{K}$ for every 
class $\mathcal{K}\subseteq \cup _{n\geq 1}B^{A^n}$, i.e. every class $\mathcal{K}$ is locally closed. 

\begin{theorem}\emph{([CF1]:)} 
Consider arbitrary non-empty sets $A$ and $B$.
A class $\mathcal{K}$ of $B$-valued functions on $A$ is definable by some set of
 $A$-to-$B$ constraints if and only if 
 $\mathcal{K}$ is locally closed and it is closed under simple variable substitutions.
\end{theorem}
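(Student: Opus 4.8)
The plan is to prove both implications of the biconditional.

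For the easy direction, I would assume $\mathcal{K}$ is definable, say $\mathcal{K} = {\bf FSC}(\mathcal{T})$ for some set $\mathcal{T}$ of constraints, and show directly that $\mathcal{K}$ is closed under simple variable substitutions and locally closed. For closure under simple variable substitution: if $f \in \mathcal{K}$ is $n$-ary and $g = f(p_1,\ldots,p_n)$ where the $p_i$ are $m$-ary projections, I would verify that for every constraint $(R,S) \in \mathcal{T}$, the function $g$ satisfies $(R,S)$ whenever $f$ does. This amounts to checking that applying $g$ to tuples from $R$ lands in $S$; since $g$ just reorders and repeats the arguments fed to $f$, applying $g$ to tuples $\mathbf{a}^1,\ldots,\mathbf{a}^m \in R$ produces the same output as applying $f$ to a reindexed selection of those tuples, which still lie in $R$, so the output lies in $S$. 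For local closure, I would take any $f$ whose restriction to each finite subset of $A^n$ agrees with a restriction of some member of $\mathcal{K}$, fix an arbitrary constraint $(R,S) \in \mathcal{T}$, and argue that $f$ satisfies it: given tuples $\mathbf{a}^1,\ldots,\mathbf{a}^n \in R \subseteq A^m$, only finitely many points of $A^n$ are involved in forming $f(\mathbf{a}^1,\ldots,\mathbf{a}^n)(\mathbf{b})$ as $\mathbf{b}$ ranges over the $m$ coordinates, so on that finite set $f$ coincides with some $h \in \mathcal{K}$, and since $h$ satisfies $(R,S)$ the relevant output value lies in $S$. Hence $f \in {\bf FSC}(\mathcal{T}) = \mathcal{K}$.

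For the harder converse, I would assume $\mathcal{K}$ is locally closed and closed under simple variable substitutions, and show $\mathcal{K} = {\bf FSC}({\bf CSF}(\mathcal{K}))$. Since ${\bf FSC}\circ{\bf CSF}$ is a closure operator by property $(b)$ in the excerpt, the inclusion $\mathcal{K} \subseteq {\bf FSC}({\bf CSF}(\mathcal{K}))$ is automatic, so the real content is the reverse inclusion. The strategy is contrapositive: given an $n$-ary function $f \notin \mathcal{K}$, I must produce a single constraint $(R,S)$ satisfied by every member of $\mathcal{K}$ but violated by $f$, which certifies $f \notin {\bf FSC}({\bf CSF}(\mathcal{K}))$. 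The natural candidate is the constraint whose antecedent $R \subseteq A^{n}$ is built from the coordinate structure of the domain $A^n$ and whose consequent records precisely the values achievable by the functions in $\mathcal{K}$; concretely one takes the canonical "graph"-type constraint canonically associated with $f$ at each finite subdomain and uses the consequent $S = \{g R : g \in \mathcal{K}\}$ suitably assembled.

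The main obstacle, and where both hypotheses must be invoked, is separating $f$ from $\mathcal{K}$ using such a constraint in the infinite setting. Local closure enters because, if $f$ agreed with members of $\mathcal{K}$ on every finite subdomain, local closure would force $f \in \mathcal{K}$; thus $f \notin \mathcal{K}$ guarantees the existence of a \emph{finite} witnessing subset $D \subseteq A^n$ on which $f$ disagrees with every member of $\mathcal{K}$. I would then use this finite $D$ to define a finite-arity constraint $(R,S)$, letting the antecedent $R$ enumerate (as columns) the points of $D$ and the consequent $S$ collect all value-tuples $g$ produces on $R$ as $g$ ranges over $\mathcal{K}$. Closure under simple variable substitution is needed to ensure the consequent $S$ so defined is genuinely preserved by all of $\mathcal{K}$ and not merely by functions of one fixed arity, so that $(R,S) \in {\bf CSF}(\mathcal{K})$ holds as required. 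The delicate point is checking that $f$ fails $(R,S)$: by construction the tuple $fR$ must lie outside $S$, which follows precisely because $f$ disagrees with every member of $\mathcal{K}$ on the chosen finite witness $D$. Verifying that this single constraint does the job — rather than needing an infinite family — is exactly what local closure buys us, and assembling the definitions of $R$ and $S$ cleanly is the technical heart of the argument.
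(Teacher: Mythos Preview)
Your proposal is correct and follows the standard line of argument for this result. Note, however, that the paper does not actually prove Theorem~1: it is quoted from~[CF1] and used as a black box. That said, the paper does prove the closely analogous Theorem~9 (definability by constraints of a fixed arity $m$), and the proof given there matches your scheme exactly: for $g\notin\mathcal{K}$ of arity $n$, use $m$-local closure to find a finite witness $F\subseteq A^n$ on which $g$ disagrees with every member of $\mathcal{K}$, encode the points of $F$ as the columns $\mathbf{a}^1,\ldots,\mathbf{a}^n$ of an antecedent $R$, set $S=\{h(\mathbf{a}^1\cdots\mathbf{a}^n):h\in\mathcal{K}_n\}$, and invoke closure under simple variable substitutions to show that every member of $\mathcal{K}$ (of any arity) satisfies $(R,S)$. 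Your sketch is precisely this construction with the arity restriction on $m$ dropped, so it aligns with what the paper (and presumably~[CF1]) does.

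One small point worth tightening when you write it out: in the separating-constraint step, the reason $(R,S)\in{\bf CSF}(\mathcal{K})$ for functions of \emph{all} arities is that any $k$-ary $h\in\mathcal{K}$ applied to tuples $\mathbf{b}^1,\ldots,\mathbf{b}^k\in R=\{\mathbf{a}^1,\ldots,\mathbf{a}^n\}$ yields $h(\mathbf{a}^{\sigma(1)}\cdots\mathbf{a}^{\sigma(k)})=h'(\mathbf{a}^1\cdots\mathbf{a}^n)$ for some $n$-ary $h'$ obtained from $h$ by variable substitution; closure then gives $h'\in\mathcal{K}_n$, so the output lies in $S$. You allude to this but do not spell it out; making it explicit is the one place your sketch is still a bit hand-wavy.
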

In other words, the closed sets of functions for the Galois connection ${\bf FSC}-{\bf CSF}$
(i.e. of the form ${\bf FSC}(\mathcal{T})$ for some set $\mathcal{T}$ of relational constraints)
 are exactly those locally closed classes which are closed under simple variable substitutions.
In order to provide the characterization of the closed systems of the dual objects,
 i.e. relational constraints, we recall the following concepts introduced in [CF1].

 \bigskip

Let $A,B,C$ and $D$ be arbitrary sets.  
For any maps $f:A\rightarrow B$ and $g:C\rightarrow D$, the \emph{concatenation} of $f$ and $g$, 
denoted $gf$, is defined to be the map with domain $f^{-1}[B\cap C]$ and codomain $D$ given by 
$(gf)(a)=g(f(a))$ for all $a\in f^{-1}[B\cap C]$. Note that concatenation is associative. 

 \smallskip

Given a non-empty family $(g_i)_{i\in I}$ of maps, $g_i:A_i\rightarrow B_i$ 
where $(A_i)_{ i\in I}$ is a family of pairwise disjoint sets,
 we denote by ${\Sigma }_{i\in I}g_i$, the map from ${\cup }_{i\in I}A_i$ to ${\cup }_{i\in I}B_i$
 whose restriction to each $A_i$ agrees with $g_i$, 
called the (\emph{piecewise}) \emph{sum of the family} $(g_i)_{i\in I}$. We also use $f+g$ to denote the 
sum of $f$ and $g$.  
Clearly, this operation is associative and commutative, and it is not difficult to see that
concatenation is distributive over sum,  
i.e. for any family $(g_i)_{i\in I}$ of maps on pairwise disjoint domains
and any map $f$
 \begin{displaymath}
({\Sigma }_{i\in I}g_i)f={\Sigma }_{i\in I}(g_if) \qquad \textrm{ and } \qquad 
f({\Sigma }_{i\in I}g_i)={\Sigma }_{i\in I}(fg_i). 
\end{displaymath}

Let $m$ and $n_j$, $j\in J$, be positive integers, and let $V$ be an arbitrary set disjoint from $\bf m$ and 
each ${\bf n}_j$. 
Any non-empty family $H=(h_j)_{j\in J}$ of maps $h_j:{\bf n}_j\rightarrow {\bf m}\cup V$
 is called a \emph{minor formation scheme} with \emph{target} $\bf m$,
 \emph{indeterminate set} $V$ and \emph{source family} $({\bf n}_j)_{j\in J}$.
Let $(R_j)_{j\in J}$ be a non-empty family of relations (of various arities) on the same set $A$,
 each $R_j$ of arity $n_j$.
An $m$-ary relation $R$ on $A$ is said to be a \emph{tight conjunctive minor} of 
the family $(R_j)_{j\in J}$ \emph{via the scheme $H$},
or simply a \emph{tight conjunctive minor} of the family $(R_j)_{j\in J}$, if 
for every $m$-tuple $\bf a$ in $A^m$, the following are equivalent:
\begin{itemize}
\item[(a)] ${\bf a}\in  R $;
\item[(b)] there is a map $\sigma :V\rightarrow A$ such that, for all $j$ in $J$, we have
$({\bf a}+\sigma )h_j\in R_j$.
\end{itemize}
The map $\sigma $ is called a \emph{Skolem map}. The $n_j$-tuple $({\bf a}+\sigma )h_j$ denotes 
the concatenation of the sum ${\bf a}+\sigma $ and $h_j$.
Formation of tight conjunctive minors subsumes permutation, identification,
projection and addition of dummy arguments, as well as arbitrary intersection of relations of the same arity. 

\smallskip

If for every $m$-tuple $\bf a$ in $A^m$, we have $(a)\Rightarrow (b)$, then $R$ is said to be a 
 \emph{restrictive conjunctive minor} of the family $(R_j)_{j\in J}$ \emph{via $H$},
or simply a \emph{restrictive conjunctive  minor} of the family $(R_j)_{j\in J}$.
On the other hand, if for every $m$-tuple $\bf a$ in $A^m$, we have $(b)\Rightarrow (a)$, then 
we say that $R$ is an \emph{extensive conjunctive minor} of the family $(R_j)_{j\in J}$ \emph{via $H$},
or simply an \emph{extensive conjunctive minor} of the family $(R_j)_{j\in J}$.
Thus a relation $R$ is a tight conjunctive minor of the family $(R_j)_{j\in J}$ if
 it is both a restrictive conjunctive minor and an extensive 
conjunctive minor of the family $(R_j)_{j\in J}$.

 \smallskip

An $A$-to-$B$ constraint $(R,S)$ is said to be a \emph{conjunctive minor} of a non-empty family 
$(R_j,S_j)_{j\in J}$ of $A$-to-$B$ constraints (of various arities) \emph{via a scheme $H$},
(or simply a \emph{conjunctive minor} of the family of constraints) if 
\begin{itemize}
\item[(i)] $R$ is a restrictive conjunctive minor of $(R_j)_{j\in J}$ via $H$, and 
\item[(ii)] $S$ is an extensive conjunctive minor of $(S_j)_{j\in J}$ via $H$. 
\end{itemize}
If the indeterminate set $V$ of the scheme $H$ is empty, i.e. for every $j$ in $J$, 
the maps $h_j$ are valued in $\bf m$, then $(R,S)$ is called a \emph{weak conjunctive minor} of the family 
$(R_j,S_j)_{j\in J}$ (see [C]).   
Observe that this operation subsumes in particular \emph{relaxations}: 
$(R,S)$ is said to be a \emph{relaxation} of $(R_0,S_0)$ if $R\subseteq R_0$ and $S\supseteq S_0$, and it is called a
 \emph{finite relaxation}, if $R$ is finite.
If both $R$ and $S$ are tight conjunctive minors of the respective families $(R_j)_{j\in J}$ and 
$(S_j)_{j\in J}$ (on $A$ and $B$,respectively) via the same scheme $H$, the constraint   
$(R,S)$ is said to be a \emph{tight conjunctive minor} of the family $(R_j,S_j)_{j\in J}$ \emph{via $H$}, 
or simply a \emph{tight conjunctive minor} of the family of constraints. In this case, if in addition
$\mid J\mid =1$, say $J=\{0\}$, then the family $(R_j,S_j)_{j\in J}$ 
contains a single constraint $(R_0,S_0)$, and $(R,S)$ is said to be a \emph{simple minor}
 of $(R_0,S_0)$ (see [Pi2]).
The following is a special case of Claim 1 in the proof of Theorem 2 in [CF1]:

\begin{main1}
If $(R,S)$ is a conjunctive minor of a non-empty family $(R_j,S_j)_{j\in J}$ 
of $A$-to-$B$ constraints, and, for each $j\in J$, $(R_j,S_j)$ is a conjunctive minor of a non-empty family 
$(R_{j}^i,S_{j}^i)_{i\in I_j}$,
then $(R,S)$ is a conjunctive minor of the non-empty family $(R_{j}^i,S_{j}^i)_{j\in J,i\in I_j}$.
\end{main1}

We say that a set $\mathcal{T}$ of relational constraints is 
\emph{closed under formation of conjunctive minors}
if whenever every member of a non-empty family $(R_j,S_j)_{j\in J}$ of constraints
 is in $\mathcal{T}$, all conjunctive minors of the family $(R_j,S_j)_{j\in J}$ are also in $\mathcal{T}$. 
For any set of constraints $\mathcal{T}$, we denote by ${\bf CM}(\mathcal{T})$ the smallest 
set of constraints containing $\mathcal{T}$, the binary equality constraint and the empty constraint,
and closed under formation of conjunctive minors. 
By the Transitivity Lemma it follows that ${\bf CM}(\mathcal{T})$ is the set of all  
conjunctive minors of non-empty families of $A$-to-$B$ constraints in  
$\mathcal{T}\cup \{(=_A,=_B),(\emptyset ,\emptyset )\}$, and that 
${\bf CM}({\bf CM}(\mathcal{T}))={\bf CM}(\mathcal{T})$.   

\begin{fact} The operator $\mathcal{T}\mapsto {\bf CM}(\mathcal{T})$ is a closure operator on 
the set of all $A$-to-$B$ relational constraints.
\end{fact}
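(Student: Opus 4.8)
The plan is to verify the three defining properties of a closure operator---extensivity, monotonicity, and idempotency---for the map $\mathcal{T}\mapsto {\bf CM}(\mathcal{T})$, leaning throughout on the explicit description of ${\bf CM}(\mathcal{T})$ already obtained as a consequence of the Transitivity Lemma.

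Extensivity and monotonicity I would dispatch directly from the defining minimality property of ${\bf CM}$. Extensivity is immediate, since by definition ${\bf CM}(\mathcal{T})$ is a set of constraints containing $\mathcal{T}$, whence $\mathcal{T}\subseteq {\bf CM}(\mathcal{T})$. For monotonicity, suppose $\mathcal{T}\subseteq \mathcal{T}'$. Then ${\bf CM}(\mathcal{T}')$ contains $\mathcal{T}'\supseteq \mathcal{T}$ together with the binary equality constraint $(=_A,=_B)$ and the empty constraint $(\emptyset ,\emptyset )$, and it is closed under formation of conjunctive minors. Thus ${\bf CM}(\mathcal{T}')$ is one of the sets containing $\mathcal{T}\cup \{(=_A,=_B),(\emptyset ,\emptyset )\}$ and closed under conjunctive minors; as ${\bf CM}(\mathcal{T})$ is by definition the smallest such set, it is contained in every such set, and in particular ${\bf CM}(\mathcal{T})\subseteq {\bf CM}(\mathcal{T}')$.

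For idempotency, extensivity already gives ${\bf CM}(\mathcal{T})\subseteq {\bf CM}({\bf CM}(\mathcal{T}))$, so the content lies in the reverse inclusion. I would obtain it by showing that ${\bf CM}(\mathcal{T})$ is itself closed under formation of conjunctive minors; since it also contains the equality and empty constraints, minimality of ${\bf CM}({\bf CM}(\mathcal{T}))$ then forces ${\bf CM}({\bf CM}(\mathcal{T}))\subseteq {\bf CM}(\mathcal{T})$. To verify the closure, let $(R,S)$ be a conjunctive minor of a non-empty family $(R_j,S_j)_{j\in J}$ all of whose members lie in ${\bf CM}(\mathcal{T})$. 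By the description of ${\bf CM}(\mathcal{T})$ derived from the Transitivity Lemma, each $(R_j,S_j)$ is a conjunctive minor of a non-empty family $(R_{j}^i,S_{j}^i)_{i\in I_j}$ of constraints drawn from $\mathcal{T}\cup \{(=_A,=_B),(\emptyset ,\emptyset )\}$. Applying the Transitivity Lemma to $(R_j,S_j)_{j\in J}$ and to these families, I conclude that $(R,S)$ is a conjunctive minor of the combined non-empty family $(R_{j}^i,S_{j}^i)_{j\in J,\, i\in I_j}$, whose members again all lie in $\mathcal{T}\cup \{(=_A,=_B),(\emptyset ,\emptyset )\}$. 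Hence $(R,S)\in {\bf CM}(\mathcal{T})$, as required.

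The main obstacle is precisely this idempotency step, in which nested conjunctive minors must be flattened into a single conjunctive minor over constraints of $\mathcal{T}\cup \{(=_A,=_B),(\emptyset ,\emptyset )\}$; this is exactly the combinatorial fact supplied by the Transitivity Lemma, so once that lemma is in hand the argument becomes routine. The only bookkeeping point I would be careful about is that all index sets remain non-empty: each $I_j$ is non-empty because $(R_j,S_j)$ is a conjunctive minor of a \emph{non-empty} family, and $J$ is non-empty by hypothesis, so the combined family is genuinely non-empty and the Transitivity Lemma indeed applies.
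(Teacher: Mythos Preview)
Your proposal is correct and follows essentially the same approach as the paper: the paper's justification of this Fact is the sentence immediately preceding it, which records that the Transitivity Lemma yields both the explicit description of ${\bf CM}(\mathcal{T})$ as the set of all conjunctive minors of non-empty families in $\mathcal{T}\cup\{(=_A,=_B),(\emptyset,\emptyset)\}$ and the identity ${\bf CM}({\bf CM}(\mathcal{T}))={\bf CM}(\mathcal{T})$. Your argument simply unpacks this, supplying the routine extensivity and monotonicity checks from minimality and spelling out how the Transitivity Lemma flattens nested conjunctive minors to give idempotency.
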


In analogy with classes of external operations, 
we need to consider a further condition for the characterization of the closed sets of constraints.
A set $\mathcal{T}$ of relational constraints is said to be \emph{locally closed} if $\mathcal{T}$ 
contains every $A$-to-$B$ constraint $(R,S)$ such that the set of all its finite relaxations 
 is contained in $\mathcal{T}$.
The \emph{local closure} of a set $\mathcal{T}$ of relational constraints, 
denoted by ${\bf LO}(\mathcal{T})$, is the smallest locally closed
set of constraints containing $\mathcal{T}$. In other words, ${\bf LO}(\mathcal{T})$ is the set of
constraints obtained from $\mathcal{T}$ by adding all those constraints whose finite relaxations
are all in $\mathcal{T}$, and thus we have:

\begin{fact} The operator $\mathcal{T}\mapsto {\bf LO}(\mathcal{T})$ is a closure operator on 
the set of all $A$-to-$B$ relational constraints.
\end{fact}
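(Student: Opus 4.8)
The plan is to establish the three defining properties of a closure operator --- extensivity, monotonicity, and idempotency --- for the map $\mathcal{T}\mapsto {\bf LO}(\mathcal{T})$. The most economical route, and the one I would take, is to verify that the locally closed sets of $A$-to-$B$ constraints form a closure system (a Moore family), so that ${\bf LO}$ is the closure operator associated with that system and the three properties follow at once from general principles. This parallels exactly the treatment of ${\bf Lo}$ for function classes recorded in Fact 2.

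First I would check that the set of \emph{all} $A$-to-$B$ constraints is locally closed, which is immediate since every constraint already belongs to it. The key step is then to show that an arbitrary intersection of locally closed sets is again locally closed. Given a family $(\mathcal{T}_i)_{i\in I}$ of locally closed sets with intersection $\mathcal{T}=\bigcap_{i\in I}\mathcal{T}_i$, suppose $(R,S)$ is a constraint all of whose finite relaxations lie in $\mathcal{T}$. Then for each fixed $i$, every finite relaxation of $(R,S)$ lies in $\mathcal{T}\subseteq \mathcal{T}_i$; since $\mathcal{T}_i$ is locally closed, $(R,S)\in \mathcal{T}_i$. As $i$ was arbitrary, $(R,S)\in \mathcal{T}$, so $\mathcal{T}$ is locally closed. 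These two observations make the locally closed sets a closure system, whence ${\bf LO}(\mathcal{T})=\bigcap\{\mathcal{T}'\mid \mathcal{T}' \text{ is locally closed and } \mathcal{T}\subseteq \mathcal{T}'\}$ is well defined and automatically extensive, monotone, and idempotent.

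Alternatively, one can argue straight from the explicit description of ${\bf LO}(\mathcal{T})$ given above, namely $\mathcal{T}$ together with all constraints whose finite relaxations all lie in $\mathcal{T}$. Extensivity and monotonicity are then routine, and the only point requiring care --- which I would single out as the main (and essentially only) obstacle --- is idempotency. Here the crux is the observation that a constraint $(R,S)$ with \emph{finite} antecedent $R$ is a finite relaxation of itself, since $R\subseteq R$, $S\supseteq S$, and $R$ is finite. Consequently, if all finite relaxations of some $(R,S)$ lie in ${\bf LO}(\mathcal{T})$, then each such relaxation, having a finite antecedent, is already forced into $\mathcal{T}$: it either lies in $\mathcal{T}$ directly, or lies in the adjoined part and, being a finite relaxation of itself, lands in $\mathcal{T}$ anyway. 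Thus every finite relaxation of $(R,S)$ belongs to $\mathcal{T}$, so $(R,S)\in {\bf LO}(\mathcal{T})$, giving ${\bf LO}({\bf LO}(\mathcal{T}))={\bf LO}(\mathcal{T})$. Either route completes the proof; neither involves a genuine difficulty once finite relaxations are handled with this care.
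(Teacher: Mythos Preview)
Your argument is correct. In the paper this fact is not given a separate proof: it is simply asserted as an immediate consequence of the preceding sentence, which records the explicit description of ${\bf LO}(\mathcal{T})$ as $\mathcal{T}$ together with all constraints whose finite relaxations already lie in $\mathcal{T}$. Your second route (direct verification from that description) is precisely the unpacking of what the paper leaves implicit, and the one nontrivial point --- idempotency via the observation that a constraint with finite antecedent is a finite relaxation of itself --- is exactly the right one to isolate. Your first route via Moore families is an equally valid alternative; the paper does not spell out either, so your write-up adds detail rather than diverging from the intended argument.
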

As in the case of function classes, if $A$ is finite, then every set
of $A$-to-$B$ constraints is locally closed. 
The following result provides the characterization of the closed sets of constraints with respect
to the Galois connection ${\bf FSC}-{\bf CSF}$:

\begin{theorem}\emph{([CF1]:)} 
Consider arbitrary non-empty sets $A$ and $B$. A set $\mathcal{T}$ of $A$-to-$B$ relational constraints 
is characterized by some set of $B$-valued functions on $A$ if and only if it is
locally closed and contains the binary equality constraint,
 the empty constraint, and it is closed under formation of conjunctive minors.
\end{theorem}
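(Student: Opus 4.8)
The statement asserts that the Galois-closed sets of constraints (those of the form ${\bf CSF}(\mathcal{K})$, equivalently those fixed by ${\bf CSF}\circ{\bf FSC}$) are exactly the locally closed sets that contain $(=_A,=_B)$ and $(\emptyset,\emptyset)$ and are closed under formation of conjunctive minors. The plan is to prove the two implications separately: necessity is a direct verification whose only real content is that satisfaction is transported along conjunctive minors, while sufficiency requires representing an arbitrary closed constraint as a conjunctive minor of members of $\mathcal{T}$.

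For necessity, suppose $\mathcal{T}={\bf CSF}(\mathcal{K})$. That $\mathcal{T}$ contains $(=_A,=_B)$ and $(\emptyset,\emptyset)$ is immediate, since every $B$-valued function satisfies these. Local closure is also quick: if every finite relaxation of $(R,S)$ lies in $\mathcal{T}$ and $f\in\mathcal{K}$ is $n$-ary, then for any $\mathbf{a}^1,\dots,\mathbf{a}^n\in R$ the constraint $(\{\mathbf{a}^1,\dots,\mathbf{a}^n\},S)$ is a finite relaxation of $(R,S)$, hence in $\mathcal{T}$, hence satisfied by $f$, so $f(\mathbf{a}^1,\dots,\mathbf{a}^n)\in S$; thus $fR\subseteq S$ and $(R,S)\in\mathcal{T}$. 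The substantive point is closure under conjunctive minors. Given $f$ satisfying each $(R_j,S_j)$ and a conjunctive minor $(R,S)$ via a scheme $H=(h_j)$, I would take $\mathbf{b}^1,\dots,\mathbf{b}^n\in R$, use that $R$ is a \emph{restrictive} minor to choose Skolem maps $\sigma_k$ with $(\mathbf{b}^k+\sigma_k)h_j\in R_j$ for all $j$, and set $\tau=f(\sigma_1,\dots,\sigma_n)$. Distributivity of concatenation over sums then yields $(f(\mathbf{b}^1,\dots,\mathbf{b}^n)+\tau)h_j=f\big((\mathbf{b}^1+\sigma_1)h_j,\dots,(\mathbf{b}^n+\sigma_n)h_j\big)\in S_j$ because $f$ satisfies $(R_j,S_j)$; since $S$ is an \emph{extensive} minor, $\tau$ witnesses $f(\mathbf{b}^1,\dots,\mathbf{b}^n)\in S$. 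This is exactly the preservation statement underlying Claim 1, and closure under conjunctive minors follows.

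For sufficiency, assume $\mathcal{T}$ is locally closed, contains $(=_A,=_B)$ and $(\emptyset,\emptyset)$, and is closed under conjunctive minors; since $\mathcal{T}\subseteq{\bf CSF}({\bf FSC}(\mathcal{T}))$ always holds, it suffices to prove the reverse inclusion. Fix $(R,S)$ satisfied by every $f\in\mathcal{K}:={\bf FSC}(\mathcal{T})$. First I would reduce to finite antecedents: any finite relaxation $(R',S')$ of $(R,S)$ is again satisfied by all of $\mathcal{K}$ (as $fR'\subseteq fR\subseteq S\subseteq S'$), so by local closure of $\mathcal{T}$ it is enough to place every such $(R',S')$ in $\mathcal{T}$; hence assume $R=\{\mathbf{a}^1,\dots,\mathbf{a}^r\}\subseteq A^m$ is finite. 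The key device is to test with $r$-ary functions: writing the $\mathbf{a}^k$ as the columns of an $m\times r$ matrix with rows $\mathbf{r}_1,\dots,\mathbf{r}_m\in A^r$, put $S_0=\{F(\mathbf{a}^1,\dots,\mathbf{a}^r):F\in\mathcal{K}\text{ $r$-ary}\}$, whose $i$-th coordinate of $F(\mathbf{a}^1,\dots,\mathbf{a}^r)$ equals $F(\mathbf{r}_i)$. Since every such $F$ satisfies $(R,S)$, we get $S_0\subseteq S$. It then remains to exhibit $(R,S)$ as a conjunctive minor of a family drawn from $\mathcal{T}\cup\{(=_A,=_B),(\emptyset,\emptyset)\}$, whence $(R,S)\in{\bf CM}(\mathcal{T})=\mathcal{T}$. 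I would take target $\mathbf{m}$, indeterminate set $V=A^r\setminus\{\mathbf{r}_1,\dots,\mathbf{r}_m\}$, and the identification $g:\mathbf{m}\cup V\to A^r$ with $g(i)=\mathbf{r}_i$ and $g|_V=\mathrm{id}$; the family consists of one copy of each $(P,Q)\in\mathcal{T}$ of arity $p$ paired with each map $\phi:\mathbf{p}\to\mathbf{m}\cup V$ for which every row $\mathrm{ev}_k\circ(g\phi)$, $k\in\mathbf{r}$, lies in $P$, taking $h$ to be $\phi$. Then $R$ is a restrictive minor: for $\mathbf{a}^l\in R$ the Skolem map $\sigma(v)=v(l)$ gives $\mathbf{a}^l+\sigma=\mathrm{ev}_l\circ g$, so $(\mathbf{a}^l+\sigma)\phi=\mathrm{ev}_l(g\phi)\in P$ by the admissibility condition. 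Conversely, a Skolem map $\tau:V\to B$ with $(\mathbf{b}+\tau)\phi\in Q$ for all included $\phi$ defines $F=(\mathbf{b}+\tau)\circ g^{-1}:A^r\to B$, and the admissibility condition is precisely what forces $F$ to satisfy every $(P,Q)\in\mathcal{T}$; hence $F\in\mathcal{K}$ and $\mathbf{b}=F(\mathbf{a}^1,\dots,\mathbf{a}^r)\in S_0\subseteq S$, so $S$ is an extensive minor.

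I expect the main obstacle to be the bookkeeping in this last construction rather than any single clever step. In particular one must handle repeated rows, where $g$ fails to be injective, by adjoining the equality constraint $(=_A,=_B)$ with scheme maps forcing the corresponding consequent coordinates to coincide, so that $F=(\mathbf{b}+\tau)\circ g^{-1}$ is well defined; this is exactly where the hypothesis $(=_A,=_B)\in\mathcal{T}$ is used, while $(\emptyset,\emptyset)$ keeps the family non-empty in degenerate cases (for instance when $V=\emptyset$, giving a weak conjunctive minor). The conceptual crux, which I would isolate as a lemma, is the choice of arity $r=|R|$ together with the $A^r$-indexed scheme: this is what converts the universally quantified hypothesis ``$(R,S)$ is preserved by every $F\in\mathcal{K}$'' into a \emph{single} conjunctive-minor witness over constraints already in $\mathcal{T}$, after which closure under conjunctive minors and local closure finish the argument.
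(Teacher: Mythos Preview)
The paper does not itself prove this statement: Theorem~2 is quoted from [CF1] without proof, so there is no ``paper's own proof'' to compare against. What the paper does record is that the Transitivity Lemma is a special case of Claim~1 in the proof of Theorem~2 in [CF1], which indicates that the original argument in [CF1] proceeds, as yours does, by first establishing that satisfaction is preserved under formation of conjunctive minors.

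Your proposal is correct and follows what is essentially the canonical line for such Galois characterizations. The necessity direction is exactly right, including the computation $(f(\mathbf{b}^1,\dots,\mathbf{b}^n)+\tau)h_j=f\big((\mathbf{b}^1+\sigma_1)h_j,\dots,(\mathbf{b}^n+\sigma_n)h_j\big)$, which is the heart of the preservation claim. For sufficiency, your reduction to finite antecedents via local closure, followed by the $A^r$-indexed scheme with $V=A^r\setminus\{\mathbf{r}_1,\dots,\mathbf{r}_m\}$ and the family ranging over all $(P,Q)\in\mathcal{T}$ together with all admissible $\phi$, is the standard construction; your verification that the resulting $F=(\mathbf{b}+\tau)\circ g^{-1}$ lies in ${\bf FSC}(\mathcal{T})$ is sound, and you correctly identify the role of $(=_A,=_B)$ in making $F$ well defined when rows repeat. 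Two minor points of hygiene: the set $V$ must be formally disjoint from $\mathbf{m}$, so a relabeling is implicit; and when $R=\emptyset$ the constraint $(\emptyset,S)$ is already a relaxation of $(\emptyset,\emptyset)\in\mathcal{T}$, which is where that hypothesis is actually consumed (rather than merely keeping the family non-empty). Neither affects the substance of your argument.
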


We finish this Section with a description of the closure operators ${\bf FSC}\circ {\bf CSF}$ and 
${\bf CSF}\circ {\bf FSC}$ as compositions of the operators ${\bf Lo}$ and ${\bf VS}$, and 
${\bf LO}$ and ${\bf CM}$, respectively. The statements $(a)$ and $(b)$ below are analogues
of $(iii)$ in Lemma 2.5 and Proposition 3.8, respectively, in [P\" o2]:

\begin{theorem}
Consider arbitrary non-empty sets $A$ and $B$, and let $\mathcal{K}\subseteq \cup _{n\geq 1}B^{A^n}$ 
be a class of $B$-valued functions on $A$, and $\mathcal{T}$ 
 a set of $A$-to-$B$ relational constraints. The following hold:
\begin{itemize}
\item[(a)] If ${\bf VS}(\mathcal{K})=\mathcal{K}$,
 then ${\bf VS}({\bf Lo}(\mathcal{K}))={\bf Lo}(\mathcal{K})$; 
\item[(b)] If ${\bf CM}(\mathcal{T})=\mathcal{T}$, then
 ${\bf CM}({\bf LO}(\mathcal{T}))={\bf LO}(\mathcal{T})$.
\end{itemize}
\end{theorem}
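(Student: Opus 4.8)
The plan is to prove both equalities by establishing the non-trivial inclusions ${\bf VS}({\bf Lo}(\mathcal{K}))\subseteq {\bf Lo}(\mathcal{K})$ and ${\bf CM}({\bf LO}(\mathcal{T}))\subseteq {\bf LO}(\mathcal{T})$; the reverse inclusions are immediate from the extensivity of ${\bf VS}$ and ${\bf CM}$. In both cases the idea is the same: an element of the outer operator is built from members of the inner (local) closure, and each such member is, on any finite piece, witnessed by an element of $\mathcal{K}$ (resp.\ by a constraint of $\mathcal{T}$). One then pushes the functional composition (resp.\ conjunctive-minor formation) through these finite witnesses and invokes the hypothesis ${\bf VS}(\mathcal{K})=\mathcal{K}$ (resp.\ ${\bf CM}(\mathcal{T})=\mathcal{T}$) to remain inside $\mathcal{K}$ (resp.\ $\mathcal{T}$) on that piece.

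For part (a), I would take an $n$-ary $g\in {\bf Lo}(\mathcal{K})$ and an arbitrary simple variable substitution $g'=g(p_1,\ldots ,p_n)$, of some arity $m$, with $p_1,\ldots ,p_n\in \mathcal{O}_A$. To conclude $g'\in {\bf Lo}(\mathcal{K})$ it suffices to approximate $g'$ locally by $\mathcal{K}$. So fix a finite $F\subseteq A^m$; as $\mathbf{a}$ runs over $F$, the tuple $(p_1(\mathbf{a}),\ldots ,p_n(\mathbf{a}))$ ranges over a finite set $F'\subseteq A^n$. Since $g\in {\bf Lo}(\mathcal{K})$, there is an ($n$-ary) $f\in \mathcal{K}$ agreeing with $g$ on $F'$. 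Then $f'=f(p_1,\ldots ,p_n)\in \mathcal{K}\mathcal{O}_A={\bf VS}(\mathcal{K})=\mathcal{K}$, and for every $\mathbf{a}\in F$ we have $f'(\mathbf{a})=f(p_1(\mathbf{a}),\ldots ,p_n(\mathbf{a}))=g(p_1(\mathbf{a}),\ldots ,p_n(\mathbf{a}))=g'(\mathbf{a})$. Thus $g'$ agrees on $F$ with a member of $\mathcal{K}$, and as $F$ was arbitrary, $g'\in {\bf Lo}(\mathcal{K})$.

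For part (b), first note that since conjunctive minors subsume relaxations, the hypothesis ${\bf CM}(\mathcal{T})=\mathcal{T}$ makes $\mathcal{T}$ closed under finite relaxations, so the description of ${\bf LO}$ recalled above applies: a constraint lies in ${\bf LO}(\mathcal{T})$ exactly when all of its finite relaxations lie in $\mathcal{T}$. Now let $(R,S)$ be a conjunctive minor, via a scheme $H=(h_j)_{j\in J}$, of a non-empty family $(R_j,S_j)_{j\in J}$ with each $(R_j,S_j)\in {\bf LO}(\mathcal{T})$. To prove $(R,S)\in {\bf LO}(\mathcal{T})$ I would show that an arbitrary finite relaxation $(R^*,S^*)$ of $(R,S)$ lies in $\mathcal{T}$. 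Using that $R$ is a restrictive conjunctive minor of $(R_j)_{j\in J}$, for each $\mathbf{a}$ in the finite set $R^*\subseteq R$ choose a Skolem map $\sigma_{\mathbf{a}}:V\rightarrow A$ with $(\mathbf{a}+\sigma_{\mathbf{a}})h_j\in R_j$ for all $j$, and set $R_j^*=\{(\mathbf{a}+\sigma_{\mathbf{a}})h_j\mid \mathbf{a}\in R^*\}$, a finite subset of $R_j$. Then each $(R_j^*,S_j)$ is a finite relaxation of $(R_j,S_j)\in {\bf LO}(\mathcal{T})$, hence belongs to $\mathcal{T}$. One checks that $(R^*,S^*)$ is a conjunctive minor of $(R_j^*,S_j)_{j\in J}$ via the same scheme $H$: the restrictive condition on $R^*$ holds by construction of the maps $\sigma_{\mathbf{a}}$ and the sets $R_j^*$, while the extensive condition on $S^*$ follows from that of $S$ together with $S\subseteq S^*$ (the consequents are left unchanged). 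Since $\mathcal{T}$ is closed under formation of conjunctive minors, $(R^*,S^*)\in \mathcal{T}$, and as $(R^*,S^*)$ was arbitrary, $(R,S)\in {\bf LO}(\mathcal{T})$.

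The routine direction is part (a), where the only substantive point is that projections carry finite sets to finite sets. The main obstacle is the finiteness reduction in part (b): one must select, from possibly infinitely many admissible Skolem maps, a finite supply indexed by the finite antecedent $R^*$ so that the resulting finite antecedents $R_j^*$ still make $(R^*,S^*)$ a genuine conjunctive minor. Keeping the consequents $S_j$ fixed, so that the finite relaxation shrinks only the antecedents, is precisely what allows the extensive condition for $S^*$ to survive, and this is the delicate bookkeeping step to get right.
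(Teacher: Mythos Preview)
Your proof is correct and follows essentially the same approach as the paper's: part~(a) is identical up to notation, and part~(b) uses the same finiteness reduction via Skolem maps indexed by the finite antecedent. The only cosmetic difference is that the paper first passes to the tight conjunctive minor $(R_0,S_0)$ of the family before taking finite relaxations, whereas you work directly with $(R,S)$; since $R$ being a restrictive conjunctive minor already guarantees the needed Skolem maps for each $\mathbf{a}\in R^{*}\subseteq R$, your route is slightly more direct but otherwise the same argument.
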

 
\begin{proof}
 First we prove $(a)$.
 Suppose that $g$ is an $t$-ary function in ${\bf VS}({\bf Lo}(\mathcal{K}))$.
That is, there is an $n$-ary function $f$ in ${\bf Lo}(\mathcal{K})$, 
and $t$-ary projections $p_1,\ldots ,p_n\in \mathcal{O}_A$ such that 
$g=f(p_1,\ldots ,p_n)$. To prove that $g$ belongs to ${\bf Lo}(\mathcal{K})$, we show
that, for every finite subset $F$ of $A^t$, there is a an $t$-ary function $g_F$
 in $\mathcal{K}$ such that
$g({\bf a})=g_F({\bf a})$ for every ${\bf a}\in F$.
So let $F$ be any finite subset of $A^t$, and consider the finite subset $F'\subseteq A^n$ defined by
\begin{displaymath}
F'=\{(p_1({\bf a}),\ldots ,p_n({\bf a}))\mid {\bf a}\in F\} 
\end{displaymath}
From the fact $f\in {\bf Lo}(\mathcal{K})$, it follows that there is an $n$-ary function $f_{F'}$ in 
$\mathcal{K}$ such that $f({\bf a}')=f_{F'}({\bf a}')$, for every ${\bf a}'\in F'$.
Consider the $t$-ary function $g_F$ defined by 
$g_F=f_{F'}(p_1,\ldots ,p_n)$. Note that $g_F$ belongs to $\mathcal{K}$, because
${\bf VS}(\mathcal{K})=\mathcal{K}$. 
By the definition of $f_{F'}$ and $g_F$, we have that, for every $t$-tuple ${\bf a}\in F$, 
\begin{displaymath}
g({\bf a})=f(p_1,\ldots ,p_n)({\bf a})=f_{F'}(p_1,\ldots ,p_n)({\bf a})=g_F({\bf a})
\end{displaymath}
Since the above argument works for every finite subset $F$ of $A^t$, we have that $g$
 is in ${\bf Lo}(\mathcal{K})$.

\bigskip 

To prove $(b)$, we show that every constraint in ${\bf CM}({\bf LO}(\mathcal{T}))$ is also in 
${\bf LO}(\mathcal{T})$. 
Note that the binary equality constraint and the empty constraint are in ${\bf LO}(\mathcal{T})$.
Thus ${\bf CM}({\bf LO}(\mathcal{T}))$ is the set of all  
conjunctive minors of non-empty families of $A$-to-$B$ constraints in  
${\bf LO}(\mathcal{T})$.
So let $(R,S)$ be a conjunctive minor of a non-empty family $(R_j,S_j)_{j\in J}$
 of constraints in ${\bf LO}(\mathcal{T})$ via a scheme $H$ with indeterminate set $V$. Consider the tight 
conjunctive minor $(R_0,S_0)$ of the family $(R_j,S_j)_{j\in J}$ via the same scheme $H=(h_j)_{j\in J}$.
Note that every relaxation of $(R,S)$ is a relaxation of $(R_0,S_0)$. Thus to prove that
$(R,S)\in {\bf LO}(\mathcal{T})$, it is enough to show that every finite relaxation of
 $(R_0,S_0)$ is in $\mathcal{T}$, because it follows then
that every finite relaxation of $(R,S)$ is in $\mathcal{T}$. 

\smallskip

Let $(F,S')$ be a finite relaxation of $(R_0,S_0)$,
 say $F$ having $n$ distinct elements ${\bf a}_1, \ldots ,{\bf a}_n$.
 Since $F\subseteq R_0$ and $R_0$ is a tight conjunctive minor 
of the family $(R_j)_{j\in J}$ via $H$, we have that, for every ${\bf a}_i\in F$,
there is a Skolem map ${\sigma }_i:V\rightarrow A$ such that, for all $j$ in $J$, 
$({\bf a}_i+{\sigma }_i)h_j\in R_j$.
For each $j$ in $J$, let $F_j$ be the subset of $R_j$, given by  
\begin{displaymath}
F_j=\{({\bf a}_i+{\sigma }_i)h_j\mid {\bf a}_i\in F\}. 
\end{displaymath}
Consider the non-empty family $(F_j,S_j)_{j\in J}$ of constraints with finite antecedents $F_j$.
Clearly, $(F,S')$ is a conjunctive minor of the family $(F_j,S_j)_{j\in J}$, and for each
$j$ in $J$, $(F_j,{S}_j)$ is a relaxation of $(R_j,S_j)$. 
Since ${\bf CM}(\mathcal{T})=\mathcal{T}$, and for each $j$ in $J$,
$(R_j,S_j)$ is in ${\bf LO}(\mathcal{T})$,   
we have that every member of the family $(F_j,S_j)_{j\in J}$ belongs to $\mathcal{T}$.  
 Hence $(F,S')$ 
is a conjunctive minor of a family of members of $\mathcal{T}$, and thus $(F,S')$  
is also in $\mathcal{T}$. 
\end{proof}

From Theorem 1, Theorem 2 and Theorem 3, we get the following factorization 
of the closure operators ${\bf FSC}\circ {\bf CSF}$ and 
${\bf CSF}\circ {\bf FSC}$: 

\begin{theorem}
Consider arbitrary non-empty sets $A$ and $B$.
 For any class of functions $\mathcal{K}\subseteq \cup _{n\geq 1}B^{A^n}$ and 
any set $\mathcal{T}$ of $A$-to-$B$ relational constraints, we have:
\begin{itemize}
\item[(i)] ${\bf FSC}({\bf CSF}(\mathcal{K}))={\bf Lo}({\bf VS}(\mathcal{K}))$, and
\item[(ii)] ${\bf CSF}({\bf FSC}(\mathcal{T}))={\bf LO}({\bf CM}(\mathcal{T}))$.
\end{itemize}
\end{theorem}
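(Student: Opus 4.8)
The plan is to exploit the elementary fact that two closure operators on the same complete lattice coincide as soon as they have the same closed elements, since for any closure operator $c$ the value $c(X)$ is the least closed element containing $X$ (it is closed and contains $X$, and any closed $Y\supseteq X$ satisfies $c(X)\subseteq c(Y)=Y$ by monotonicity and idempotency). The classes $\mathcal{K}\subseteq\cup_{n\geq 1}B^{A^n}$ and the sets $\mathcal{T}$ of $A$-to-$B$ constraints each form such a lattice under inclusion. On the function side ${\bf FSC}\circ{\bf CSF}$ is a closure operator whose closed elements are, by Theorem 1, exactly the classes that are simultaneously locally closed and closed under simple variable substitutions; dually, on the constraint side ${\bf CSF}\circ{\bf FSC}$ is a closure operator whose closed elements are, by Theorem 2, exactly the sets that are locally closed, contain $(=_A,=_B)$ and $(\emptyset,\emptyset)$, and are closed under formation of conjunctive minors. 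Thus it suffices to show that ${\bf Lo}\circ{\bf VS}$ and ${\bf LO}\circ{\bf CM}$ are closure operators with precisely these same families of closed elements.

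For $(i)$, I would first check that ${\bf Lo}\circ{\bf VS}$ is a closure operator. Extensivity and monotonicity are immediate from the corresponding properties of ${\bf Lo}$ and ${\bf VS}$ recorded in Facts 1 and 2. The crucial point is idempotency: here I would apply Theorem 3$(a)$ to the class ${\bf VS}(\mathcal{K})$, which is ${\bf VS}$-closed by the idempotency of ${\bf VS}$, to obtain ${\bf VS}({\bf Lo}({\bf VS}(\mathcal{K})))={\bf Lo}({\bf VS}(\mathcal{K}))$; composing once more with ${\bf Lo}$ and using ${\bf Lo}\circ{\bf Lo}={\bf Lo}$ then yields $({\bf Lo}\circ{\bf VS})^2={\bf Lo}\circ{\bf VS}$. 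Next I would identify the closed elements: a class $\mathcal{K}$ satisfies ${\bf Lo}({\bf VS}(\mathcal{K}))=\mathcal{K}$ if and only if it is both ${\bf VS}$-closed and ${\bf Lo}$-closed. Indeed, any such fixed point lies in the image of ${\bf Lo}$, hence is locally closed, and the sandwich $\mathcal{K}\subseteq{\bf VS}(\mathcal{K})\subseteq{\bf Lo}({\bf VS}(\mathcal{K}))=\mathcal{K}$ forces ${\bf VS}(\mathcal{K})=\mathcal{K}$; conversely, closure under both operators makes ${\bf Lo}({\bf VS}(\mathcal{K}))=\mathcal{K}$ trivially. By Theorem 1 these are exactly the closed elements of ${\bf FSC}\circ{\bf CSF}$, so the two closure operators coincide, giving $(i)$.

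For $(ii)$ the argument runs in complete parallel, with Theorem 3$(b)$ and Fact 3 replacing Theorem 3$(a)$ and Fact 1. Idempotency of ${\bf LO}\circ{\bf CM}$ follows by applying Theorem 3$(b)$ to the ${\bf CM}$-closed set ${\bf CM}(\mathcal{T})$ to get ${\bf CM}({\bf LO}({\bf CM}(\mathcal{T})))={\bf LO}({\bf CM}(\mathcal{T}))$ and then using ${\bf LO}\circ{\bf LO}={\bf LO}$ from Fact 4. The fixed points of ${\bf LO}\circ{\bf CM}$ are exactly the sets that are both ${\bf CM}$-closed and ${\bf LO}$-closed; since ${\bf CM}(\mathcal{T})$ by definition already contains $(=_A,=_B)$ and $(\emptyset,\emptyset)$, being ${\bf CM}$-closed is equivalent to containing these two constraints together with closure under conjunctive minors, so by Theorem 2 these fixed points are precisely the closed elements of ${\bf CSF}\circ{\bf FSC}$. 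Equality of the two closure operators then gives $(ii)$.

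The only genuine obstacle is the idempotency of the composites, and this is precisely the content of Theorem 3: without the fact that applying the local closure preserves closure under variable substitutions (respectively conjunctive minors), the maps ${\bf Lo}\circ{\bf VS}$ and ${\bf LO}\circ{\bf CM}$ need not be idempotent, and the clean identification of their fixed points with the Galois-closed systems would break down. Everything else reduces to the extensivity, monotonicity and idempotency recorded in Facts 1--4, together with the bookkeeping observation that the order of the two closures matters, the correct order applying the local closure last.
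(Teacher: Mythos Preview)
Your proposal is correct and follows exactly the route the paper has in mind: the paper derives Theorem~4 directly ``from Theorem~1, Theorem~2 and Theorem~3'' without spelling out the argument, and your write-up supplies precisely the missing details---using Theorem~3 to obtain idempotency of ${\bf Lo}\circ{\bf VS}$ and ${\bf LO}\circ{\bf CM}$, identifying their fixed points via the sandwich $\mathcal{K}\subseteq{\bf VS}(\mathcal{K})\subseteq{\bf Lo}({\bf VS}(\mathcal{K}))$, and then invoking Theorems~1 and~2 to match these fixed points with the Galois-closed systems.
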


\section{Galois connections between functions and constraints with arity restrictions}

Let $n$ and $m$ be positive integers.
For any set $\mathcal{T}$ of $A$-to-$B$ constraints, we denote by ${\bf FSC}_n(\mathcal{T})$ 
 the class of all $n$-ary functions satisfying every member of $\mathcal{T}$, and
 for any class $\mathcal{K}$ 
of $B$-valued functions on $A$, we denote by ${\bf CSF}_m(\mathcal{K})$ the
set of all $m$-ary constraints satisfied by every member of $\mathcal{K}$. 
That is, 
\begin{itemize}
\item[$\cdot $] ${\bf FSC}_n(\mathcal{T})=B^{A^n}\cap {\bf FSC}(\mathcal{T})$, and  
\item[$\cdot $] ${\bf CSF}_m(\mathcal{K})= \mathcal{Q}_m\cap {\bf CSF}(\mathcal{K})$,
where $\mathcal{Q}_m$ denotes the set of all $m$-ary $A$-to-$B$ constraints, i.e. the cartesian product 
 $\mathcal{P}(A^m)\times \mathcal{P}(B^m)$ of the set of all subsets of $A^m$ and the set of all subsets of $B^m$.
\end{itemize}
Thus a class $\mathcal{K}_n\subseteq B^{A^n}$ of $n$-ary $B$-valued functions on $A$ 
 is said to be \emph{definable within $B^{A^n}$} by a set $\mathcal{T}$ of $A$-to-$B$ constraints,
 if $\mathcal{K}_n={\bf FSC}_n(\mathcal{T})$, and a set $\mathcal{T}_m$ of $m$-ary $A$-to-$B$ constraints
is said to be \emph{characterized within $\mathcal{Q}_m$} by a set $\mathcal{K}$ of 
$B$-valued functions on $A$, if $\mathcal{T}_m={\bf CSF}_m(\mathcal{K})$.

\subsection{Restricting function arities}

We begin with the characterization of the closed classes of functions of fixed arities definable by 
relational constraints, and the description
of the dual closed sets characterized by functions of given arities.
A class $\mathcal{K}_n$ of $n$-ary $B$-valued functions on $A$  
is said to be \emph{closed under $n$-ary simple variable substitutions}
if every $n$-ary function obtained from a member of $\mathcal{K}_n$
 by simple variable substitution also belongs to $\mathcal{K}_n$, that is,
if $\mathcal{K}_n=B^{A^n}\cap {\bf VS}(\mathcal{K}_n)$. We denote by ${\bf VS}_n(\mathcal{K}_n)$ 
the \emph{closure under $n$-ary simple variable substitutions of} $\mathcal{K}_n$ given by
${\bf VS}_n(\mathcal{K}_n)=B^{A^n}\cap {\bf VS}(\mathcal{K}_n)$.  
Note that if $\mathcal{K}$ is a locally closed class of $B$-valued functions on $A$,
and closed under simple variable substitutions, and if $\mathcal{K}_n$
is the class of $n$-ary functions in $\mathcal{K}$, then
$\mathcal{K}_n$ is locally closed and it is closed under $n$-ary simple variable substitutions.
The following is an immediate consequence of the definitions above:

\begin{fact} Consider arbitrary non-empty sets $A$ and $B$, and let $n$ be a positive integer.
 For any class $\mathcal{K}_n$ of $n$-ary $B$-valued functions on $A$, 
  \begin{displaymath}
 B^{A^n}\cap {\bf Lo}({\bf VS}(\mathcal{K}_n))={\bf Lo}({\bf VS}_n(\mathcal{K}_n)).
\end{displaymath}   
\end{fact}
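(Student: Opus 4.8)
The goal is to prove the set equality
\[
B^{A^n}\cap {\bf Lo}({\bf VS}(\mathcal{K}_n))={\bf Lo}({\bf VS}_n(\mathcal{K}_n)),
\]
and since both sides are subsets of $B^{A^n}$ (the left by its intersection with $B^{A^n}$, the right because ${\bf VS}_n(\mathcal{K}_n)\subseteq B^{A^n}$ consists only of $n$-ary functions and local closure preserves arity), the plan is to establish the two inclusions separately, unwinding the definitions of the three operators involved.

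First I would recall that ${\bf VS}_n(\mathcal{K}_n)=B^{A^n}\cap {\bf VS}(\mathcal{K}_n)$ by definition, so ${\bf VS}_n(\mathcal{K}_n)\subseteq {\bf VS}(\mathcal{K}_n)$. Since ${\bf Lo}$ is monotone (Fact 2), this gives ${\bf Lo}({\bf VS}_n(\mathcal{K}_n))\subseteq {\bf Lo}({\bf VS}(\mathcal{K}_n))$. Intersecting with $B^{A^n}$ and using that the right-hand side already lies in $B^{A^n}$ yields the inclusion ${\bf Lo}({\bf VS}_n(\mathcal{K}_n))\subseteq B^{A^n}\cap {\bf Lo}({\bf VS}(\mathcal{K}_n))$. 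This direction is routine, relying only on monotonicity and arity-invariance of local closure.

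For the reverse inclusion, I would take an $n$-ary function $g\in B^{A^n}\cap {\bf Lo}({\bf VS}(\mathcal{K}_n))$ and show $g\in {\bf Lo}({\bf VS}_n(\mathcal{K}_n))$. By the description of ${\bf Lo}$, for every finite $F\subseteq A^n$ there is a function $h_F\in {\bf VS}(\mathcal{K}_n)$ whose restriction to $F$ agrees with $g$. The subtle point is that $h_F$, being in ${\bf VS}(\mathcal{K}_n)=\mathcal{K}_n\mathcal{O}_A$, need not itself be $n$-ary: it is obtained from some $k$-ary member of $\mathcal{K}_n$ by simple variable substitution into $n$ variables, hence $h_F$ is $n$-ary precisely when the substitution targets $n$ variables. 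Since $\mathcal{K}_n\subseteq B^{A^n}$ contains only $n$-ary functions, every $h_F\in\mathcal{K}_n\mathcal{O}_A$ is obtained from an $n$-ary function by composing with $n$-ary projections, so $h_F$ has the arity dictated by those projections; the ones agreeing with the $n$-ary $g$ on $F$ can be taken $n$-ary, placing $h_F\in B^{A^n}\cap {\bf VS}(\mathcal{K}_n)={\bf VS}_n(\mathcal{K}_n)$. Thus every finite restriction of $g$ is matched by a member of ${\bf VS}_n(\mathcal{K}_n)$, giving $g\in {\bf Lo}({\bf VS}_n(\mathcal{K}_n))$.

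I expect the main obstacle to be exactly this arity-tracking argument: verifying that the witnessing function $h_F$ extracted from the local-closure condition can be chosen $n$-ary. The cleanest way to handle it is to observe that any member of ${\bf VS}(\mathcal{K}_n)$ agreeing with the $n$-ary function $g$ on a nonempty finite set $F$ must already be $n$-ary, since agreement on tuples in $A^n$ forces the domain to be $A^n$; for the edge case $F=\emptyset$ one may pick any fixed $n$-ary member of ${\bf VS}_n(\mathcal{K}_n)$, which is nonempty whenever $\mathcal{K}_n$ is, and handle $\mathcal{K}_n=\emptyset$ trivially. With this observation the inclusion follows directly from the definition of ${\bf Lo}$, completing the proof.
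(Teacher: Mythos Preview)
Your argument is correct; the paper itself does not supply a proof of Fact 5, merely declaring it ``an immediate consequence of the definitions above.'' Your decisive observation in the final paragraph --- that any witness in ${\bf VS}(\mathcal{K}_n)$ agreeing with the $n$-ary $g$ on a nonempty $F\subseteq A^n$ must itself have domain $A^n$, hence be $n$-ary --- is precisely the content of that immediacy.

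Two remarks. First, the middle passage on arity-tracking is muddled: every member of $\mathcal{K}_n$ is $n$-ary (not ``$k$-ary''), and an element of $\mathcal{K}_n\mathcal{O}_A$ arises by composing some $n$-ary $f\in\mathcal{K}_n$ with $n$ projections of a common arity $t$, yielding a $t$-ary function; the phrase ``composing with $n$-ary projections'' conflates the number of projections with their arity. This passage is not load-bearing --- your last paragraph carries the proof --- but it should be cleaned up or deleted.

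Second, the whole argument compresses to one line once you isolate the general fact that local closure acts arity-wise: for any class $\mathcal{M}\subseteq\bigcup_{t\ge 1}B^{A^t}$ one has
\[
B^{A^n}\cap{\bf Lo}(\mathcal{M})={\bf Lo}(B^{A^n}\cap\mathcal{M}),
\]
since a witness for an $n$-ary $g$ on any nonempty finite $F\subseteq A^n$ must lie in $B^{A^n}$. Taking $\mathcal{M}={\bf VS}(\mathcal{K}_n)$ and recalling ${\bf VS}_n(\mathcal{K}_n)=B^{A^n}\cap{\bf VS}(\mathcal{K}_n)$ gives Fact 5 immediately, and this formulation also absorbs the $F=\emptyset$ and $\mathcal{K}_n=\emptyset$ edge cases without a separate case distinction.
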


We make use of Fact 5 to prove:

\begin{theorem}
Consider arbitrary non-empty sets $A$ and $B$, and let $n$ be a positive integer.
 For any class of $n$-ary functions $\mathcal{K}_n\subseteq B^{A^n}$ 
the following conditions are equivalent:
\begin{itemize}
\item[(i)] $\mathcal{K}_n$ is locally closed and it is closed under $n$-ary simple variable substitutions;
\item[(ii)] $\mathcal{K}_n$ is definable within $B^{A^n}$ by some set of $A$-to-$B$ constraints.
\end{itemize}
\end{theorem}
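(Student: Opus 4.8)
The plan is to prove both implications by reducing the arity-restricted statement to the unrestricted characterization in Theorem 1, using the bridging identity in Fact 5 to push the intersection with $B^{A^n}$ through the relevant closure operators. The natural candidate for the defining set in the forward direction will be $\mathcal{T}={\bf CSF}(\mathcal{K}_n)$, and the whole argument will hinge on establishing ${\bf FSC}_n({\bf CSF}(\mathcal{K}_n))=\mathcal{K}_n$.

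For the implication $(ii)\Rightarrow(i)$, suppose $\mathcal{K}_n={\bf FSC}_n(\mathcal{T})$ for some set $\mathcal{T}$ of $A$-to-$B$ constraints. I would set $\mathcal{K}={\bf FSC}(\mathcal{T})$; by Theorem 1 this class is locally closed and closed under simple variable substitutions. Since ${\bf FSC}_n(\mathcal{T})=B^{A^n}\cap {\bf FSC}(\mathcal{T})$ by definition, $\mathcal{K}_n$ is exactly the class of $n$-ary members of $\mathcal{K}$, and the observation recorded immediately before Fact 5 then yields that such a restriction is locally closed and closed under $n$-ary simple variable substitutions. This direction needs no further work.

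For $(i)\Rightarrow(ii)$, I would take $\mathcal{T}={\bf CSF}(\mathcal{K}_n)$ and compute ${\bf FSC}_n(\mathcal{T})$ in a short chain. Unwinding the definition gives ${\bf FSC}_n({\bf CSF}(\mathcal{K}_n))=B^{A^n}\cap {\bf FSC}({\bf CSF}(\mathcal{K}_n))$. By Theorem 4$(i)$ the unrestricted closure ${\bf FSC}({\bf CSF}(\mathcal{K}_n))$ equals ${\bf Lo}({\bf VS}(\mathcal{K}_n))$, and Fact 5 then rewrites $B^{A^n}\cap {\bf Lo}({\bf VS}(\mathcal{K}_n))$ as ${\bf Lo}({\bf VS}_n(\mathcal{K}_n))$. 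At this point hypothesis $(i)$ closes the argument: closure under $n$-ary simple variable substitutions means ${\bf VS}_n(\mathcal{K}_n)=\mathcal{K}_n$, and local closedness means ${\bf Lo}(\mathcal{K}_n)=\mathcal{K}_n$, so the chain collapses to $\mathcal{K}_n$. Thus $\mathcal{K}_n={\bf FSC}_n({\bf CSF}(\mathcal{K}_n))$, exhibiting $\mathcal{K}_n$ as definable within $B^{A^n}$ by $\mathcal{T}={\bf CSF}(\mathcal{K}_n)$.

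I do not expect a genuine obstacle, since the heavy lifting is already in place: Theorem 1 supplies the unrestricted characterization, Theorem 4$(i)$ identifies the closure with the composite ${\bf Lo}\circ{\bf VS}$, and Fact 5 is precisely the commutation identity needed to move $B^{A^n}\cap(-)$ across that composite. The one point demanding care is the bookkeeping around ${\bf VS}_n$: I must use that closure under $n$-ary simple variable substitutions is the statement $\mathcal{K}_n=B^{A^n}\cap{\bf VS}(\mathcal{K}_n)$, i.e. ${\bf VS}_n(\mathcal{K}_n)=\mathcal{K}_n$, which is exactly what collapses the final chain, rather than the a priori different identity ${\bf VS}(\mathcal{K}_n)=\mathcal{K}_n$ that one should not assume.
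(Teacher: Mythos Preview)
Your proof is correct and follows essentially the same approach as the paper: both directions reduce to the unrestricted characterization via Fact 5, with $(ii)\Rightarrow(i)$ handled identically. For $(i)\Rightarrow(ii)$ the paper sets $\mathcal{K}={\bf VS}(\mathcal{K}_n)$, invokes Theorem 1 (together with Theorem 3(a)) to obtain some defining set $\mathcal{T}$ for ${\bf Lo}(\mathcal{K})$, and then applies Fact 5; you instead take $\mathcal{T}={\bf CSF}(\mathcal{K}_n)$ explicitly and appeal to the packaged factorization Theorem 4(i), but since Theorem 4 is itself derived from Theorems 1--3, the underlying argument is the same.
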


\begin{proof}
To prove $(ii)\Rightarrow (i)$, assume $(ii)$, i.e. $\mathcal{K}_n={\bf FSC}_n(\mathcal{T})$, for
some set $\mathcal{T}$ of $A$-to-$B$ constraints.
 Let $\mathcal{K}={\bf FSC}(\mathcal{T})$. 
 By Theorem 1, we have that
$\mathcal{K}$ is locally closed and it is closed under simple variable substitutions, and since 
$\mathcal{K}_n=B^{A^n}\cap \mathcal{K}$, it follows from the comment preceeding Fact 5 that
$\mathcal{K}_n$ is locally closed and it is closed under $n$-ary 
simple variable substitutions.

\bigskip

To show that $(i)\Rightarrow (ii)$ holds, assume $(i)$, and let $\mathcal{K}={\bf VS}(\mathcal{K}_n)$.
Since ${\bf Lo}(\mathcal{K})$ is closed under simple variable substitutions,
 it follows from Theorem 1,
 that ${\bf Lo}(\mathcal{K})$ is definable by some set $\mathcal{T}$ of $A$-to-$B$ constraints.
 By Fact 5 $\mathcal{K}_n$ is the class of $n$-ary functions 
in ${\bf Lo}(\mathcal{K})$, and thus $\mathcal{K}_n$ is definable within $B^{A^n}$ by $\mathcal{T}$.     
\end{proof}

Note that for $n=1$, every class $\mathcal{K}\subseteq B^A$ of unary $B$-valued functions on $A$ is 
closed under unary simple variable substitutions. Thus,
from Theorem 5, it follows:

\begin{cor}
Consider arbitrary non-empty sets $A$ and $B$.
A class $\mathcal{K}$ of unary $B$-valued functions on $A$ 
is definable within $B^{A}$ by some set of $A$-to-$B$ constraints if and only if 
 $\mathcal{K}$ is locally closed.
\end{cor}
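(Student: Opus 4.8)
The plan is to derive the corollary directly from Theorem~5 by specializing to $n=1$, the point being that the second closure condition in part (i) becomes vacuous in the unary case. First I would invoke Theorem~5 with $n=1$: it asserts that a class $\mathcal{K}\subseteq B^A$ is definable within $B^A$ by some set of $A$-to-$B$ constraints if and only if $\mathcal{K}$ is locally closed and closed under unary simple variable substitutions. Thus the corollary follows as soon as I establish that \emph{every} class $\mathcal{K}\subseteq B^A$ of unary functions is automatically closed under unary simple variable substitutions, so that this conjunct may be dropped.

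To establish this vacuity claim, I would unwind the definition ${\bf VS}_1(\mathcal{K})=B^A\cap {\bf VS}(\mathcal{K})$ and verify that $B^A\cap {\bf VS}(\mathcal{K})=\mathcal{K}$. Since $\mathcal{K}$ consists solely of unary functions, every element of ${\bf VS}(\mathcal{K})=\mathcal{K}\mathcal{O}_A$ has the form $f(p_1)$, where $f\in\mathcal{K}$ is unary and $p_1$ is an $m$-ary projection in $\mathcal{O}_A$; such a function is $m$-ary. For it to lie in $B^A$, that is, to be unary, one needs $m=1$, and the unique unary projection is the identity map on $A$, whence $f(p_1)=f$. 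Consequently the only unary function obtainable from $\mathcal{K}$ by simple variable substitution is already a member of $\mathcal{K}$, giving $B^A\cap {\bf VS}(\mathcal{K})=\mathcal{K}$, which is exactly the assertion that $\mathcal{K}$ is closed under unary simple variable substitutions.

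With the vacuity in hand, condition (i) of Theorem~5 collapses to ``$\mathcal{K}$ is locally closed,'' while condition (ii) reads ``$\mathcal{K}$ is definable within $B^A$ by some set of $A$-to-$B$ constraints,'' and their equivalence is precisely the statement of the corollary. I do not expect any genuine obstacle here: the only step requiring care is confirming that no new unary function can arise from a unary one by simple variable substitution, and this rests entirely on the observation that a projection onto a one-element index set reduces to the identity map. The rest is a straightforward specialization of the already-established Theorem~5.
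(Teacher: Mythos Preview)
Your proposal is correct and follows exactly the same route as the paper: the paper simply remarks, just before stating the corollary, that for $n=1$ every class $\mathcal{K}\subseteq B^A$ is closed under unary simple variable substitutions, and then invokes Theorem~5. Your argument merely makes this remark explicit by observing that the only unary projection is the identity, which is precisely the justification the paper leaves implicit.
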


Theorem 5 provides necessary and sufficient closure conditions for a class of external operations of fixed arity
to be definable by relational constraints. To describe the closed sets of relational constraints characterized 
by external operations of a given arity,
 we need to strengthen the notion of local closure for sets of constraints.

\smallskip

For a positive integer $n$, we say that a set $\mathcal{T}$ of relational constraints is 
$n$-\emph{locally closed} if $\mathcal{T}$ contains
every $A$-to-$B$ constraint $(R,S)$ such that the set of all its relaxations 
with antecedent of size at most $n$ is contained in $\mathcal{T}$.
The $n$-\emph{local closure} of a set $\mathcal{T}$ of relational constraints
 is the smallest $n$-locally closed
set of constraints containing $\mathcal{T}$, and it is 
denoted by ${\bf LO}_n(\mathcal{T})$. Note that 
every $n$-locally closed set of constraints is in particular locally closed. In fact,
for any set $\mathcal{T}$ of $A$-to-$B$ relational constraints,
 ${\bf LO}(\mathcal{T})=\cap _{m\geq 1}{\bf LO}_m(\mathcal{T})$.
Similarly to the closure ${\bf LO}(\mathcal{T})$,
it is easy to see that ${\bf LO}_n(\mathcal{T})$ 
is the set of constraints obtained from $\mathcal{T}$ by adding all those constraints
 whose finite relaxations with 
antecedent of size at most $n$ are all in $\mathcal{T}$. From these observations it follows:
\begin{fact}
Consider arbitrary non-empty sets $A$ and $B$, and let $n$ be a positive integer.
\begin{itemize}
\item[(a)] The operator $\mathcal{T}\mapsto {\bf LO}_n(\mathcal{T})$ is a closure operator on 
the set of all $A$-to-$B$ relational constraints.
\item[(b)] For any set $\mathcal{T}$ of $A$-to-$B$ relational constraints, 
$({\bf LO}_n(\mathcal{T}))_{n\geq 1}$ is a descending chain under inclusion, i.e.
 ${\bf LO}_m(\mathcal{T})\subseteq {\bf LO}_n(\mathcal{T})$ whenever $m\geq n$, 
and its infimum is ${\bf LO}(\mathcal{T})$.
\end{itemize}
\end{fact}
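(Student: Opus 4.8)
The plan is to reduce both parts to the explicit one-step description of the $n$-local closure recorded immediately before the statement, namely
\begin{displaymath}
{\bf LO}_n(\mathcal{T})=\mathcal{T}\cup C_n(\mathcal{T}),
\end{displaymath}
where I write $C_n(\mathcal{T})$ for the set of all constraints $(R,S)$ every relaxation of which of antecedent size at most $n$ already lies in $\mathcal{T}$. The single observation that carries the whole argument is a reflexivity remark: the size bound is imposed on the antecedent of the relaxation, and a constraint is always a relaxation of itself, so any $(R',S')$ with $|R'|\le n$ is a relaxation of itself of antecedent size at most $n$. Consequently, if $(R',S')\in C_n(\mathcal{T})$ then $(R',S')\in\mathcal{T}$; in other words, on constraints of antecedent size at most $n$, membership in ${\bf LO}_n(\mathcal{T})$ collapses to membership in $\mathcal{T}$.

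For part (a) I would verify the three closure-operator axioms directly from this formula. Extensivity is immediate from the union, and monotonicity is immediate since enlarging $\mathcal{T}$ enlarges both summands $\mathcal{T}$ and $C_n(\mathcal{T})$. The only axiom requiring care is idempotency. Given $(R,S)\in{\bf LO}_n({\bf LO}_n(\mathcal{T}))$, I may assume it does not already lie in ${\bf LO}_n(\mathcal{T})$, so every antecedent-size-$\le n$ relaxation $(R',S')$ of $(R,S)$ lies in ${\bf LO}_n(\mathcal{T})$; but each such $(R',S')$ has antecedent size at most $n$, whence by the reflexivity remark it lies in $\mathcal{T}$. Thus all these relaxations lie in $\mathcal{T}$, i.e. $(R,S)\in C_n(\mathcal{T})\subseteq{\bf LO}_n(\mathcal{T})$, and idempotency follows. (Alternatively one may bypass the formula by noting that an arbitrary intersection of $n$-locally closed sets is again $n$-locally closed and that the full constraint set is $n$-locally closed, so the $n$-locally closed sets form a Moore family and the induced least-closed-superset map is automatically a closure operator; but the reflexivity remark would still be needed to identify that least superset with $\mathcal{T}\cup C_n(\mathcal{T})$.)

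For part (b), the chain inclusion is a one-line comparison: when $m\ge n$, the relaxations of antecedent size at most $n$ form a subset of those of antecedent size at most $m$, so the condition defining $C_m(\mathcal{T})$ is stronger than that defining $C_n(\mathcal{T})$; hence $C_m(\mathcal{T})\subseteq C_n(\mathcal{T})$ and, adding the common summand $\mathcal{T}$, ${\bf LO}_m(\mathcal{T})\subseteq{\bf LO}_n(\mathcal{T})$. For the infimum I would distribute the common term through the intersection, obtaining $\bigcap_{n\ge 1}{\bf LO}_n(\mathcal{T})=\mathcal{T}\cup\bigcap_{n\ge 1}C_n(\mathcal{T})$, and then identify $\bigcap_{n\ge 1}C_n(\mathcal{T})$ with the set of constraints all of whose finite relaxations lie in $\mathcal{T}$: a finite relaxation has antecedent of some finite size $k$ and is therefore already required at level $n=k$, while conversely each $C_n$-condition concerns only finite relaxations. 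By the explicit description of ${\bf LO}(\mathcal{T})$ recalled earlier, $\mathcal{T}\cup\bigcap_{n\ge 1}C_n(\mathcal{T})$ is precisely ${\bf LO}(\mathcal{T})$.

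The genuinely load-bearing point, and the one I would flag as the main obstacle, is the reflexivity remark of the first paragraph: it is what makes idempotency go through in (a) and what collapses ``lies in ${\bf LO}_n(\mathcal{T})$'' back to ``lies in $\mathcal{T}$''. Everything else — extensivity, monotonicity, the nesting $C_m(\mathcal{T})\subseteq C_n(\mathcal{T})$, and the distribution of $\mathcal{T}$ across the intersection — is routine set-theoretic bookkeeping.
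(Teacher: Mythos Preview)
Your proposal is correct and follows exactly the line the paper intends: the paper offers no proof of this Fact, merely asserting that it follows from the explicit one-step description of ${\bf LO}_n(\mathcal{T})$ and the identity ${\bf LO}(\mathcal{T})=\bigcap_{m\ge 1}{\bf LO}_m(\mathcal{T})$ recorded just before the statement. Your write-up supplies the details the paper leaves implicit, and the ``reflexivity remark'' you isolate is indeed the only point with any content; the identity $\bigcap_n(\mathcal{T}\cup C_n(\mathcal{T}))=\mathcal{T}\cup\bigcap_n C_n(\mathcal{T})$ you use holds for any fixed set $\mathcal{T}$, so the distribution step in (b) is sound.
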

 
The following analogue of Theorem 2 shows that, in addition, parametrized local closure guarantees the existence
of characterizations of sets of constraints by classes of functions of fixed arities. 
\begin{theorem}
Consider arbitrary non-empty sets $A$ and $B$ and let $n$ be a positive integer.
Let $\mathcal{T}$ be a set of $A$-to-$B$ relational constraints. Then the following are equivalent:
\begin{itemize}
\item[(i)]$\mathcal{T}$ is $n$-locally closed and contains the binary equality constraint,
 the empty constraint, and it is closed under formation of conjunctive minors;
\item[(ii)]$\mathcal{T}$ is characterized by some set of $n$-ary $B$-valued functions on $A$.
\end{itemize}
\end{theorem}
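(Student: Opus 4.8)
The plan is to exhibit $\mathcal{K}={\bf FSC}_n(\mathcal{T})$ as the witnessing class of $n$-ary functions and to prove the two implications separately, in each case isolating the one genuinely new ingredient — the passage from local to $n$-local closure — and deferring everything else to Theorem 1 and Theorem 2. The guiding observation throughout is the arity identity: for an $n$-ary function $g$ and any antecedent $R$ one has $gR=\bigcup\{gR''\mid R''\subseteq R,\ |R''|\le n\}$, because each value $g(\mathbf{b}^1,\ldots,\mathbf{b}^n)$ involves at most $n$ tuples of $R$. Consequently $g$ satisfies $(R,S)$ if and only if it satisfies every relaxation $(R'',S)$ with $|R''|\le n$.

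For $(ii)\Rightarrow(i)$, suppose $\mathcal{T}={\bf CSF}(\mathcal{K})$ with $\mathcal{K}\subseteq B^{A^n}$. Then $\mathcal{T}$ is Galois closed, so Theorem 2 already yields that it contains the equality and empty constraints and is closed under formation of conjunctive minors; only $n$-local closure remains. Here I would apply the arity identity directly: if all relaxations of $(R,S)$ of antecedent size at most $n$ lie in $\mathcal{T}={\bf CSF}(\mathcal{K})$, then every $g\in\mathcal{K}$ satisfies each such $(R'',S)$, hence satisfies $(R,S)$, whence $(R,S)\in\mathcal{T}$. This is exactly the defining condition of $n$-local closure.

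For $(i)\Rightarrow(ii)$, the inclusion $\mathcal{T}\subseteq{\bf CSF}(\mathcal{K})$ is immediate from the definition of ${\bf FSC}_n$. For the reverse I would first note that $n$-local closure implies local closure, so $\mathcal{T}$ meets the hypotheses of Theorem 2 and is Galois closed, i.e. $\mathcal{T}={\bf CSF}({\bf FSC}(\mathcal{T}))$; the goal becomes ${\bf CSF}({\bf FSC}_n(\mathcal{T}))\subseteq{\bf CSF}({\bf FSC}(\mathcal{T}))$. Given $(R,S)$ in the left-hand side, $n$-local closure of $\mathcal{T}$ reduces the task to showing that every relaxation $(R',S')$ with $|R'|\le n$ lies in $\mathcal{T}$; any such relaxation is again satisfied by every member of ${\bf FSC}_n(\mathcal{T})$, so it suffices to prove it is satisfied by every $f\in{\bf FSC}(\mathcal{T})$ of arbitrary arity. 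This I would handle by contraposition together with an \emph{arity compression}: if a $p$-ary $f\in{\bf FSC}(\mathcal{T})$ violated $(R',S')$, there would be $\mathbf{b}^1,\ldots,\mathbf{b}^p\in R'$ with $f(\mathbf{b}^1,\ldots,\mathbf{b}^p)\notin S'$, and since these range over $R'$ at most $|R'|\le n$ of them are distinct, say $\mathbf{c}^1,\ldots,\mathbf{c}^d$ with $\mathbf{b}^j=\mathbf{c}^{\rho(j)}$ and $d\le n$. The simple variable substitution $g=f(q_1,\ldots,q_p)$, where $q_j$ is the $n$-ary projection onto coordinate $\rho(j)$, is an $n$-ary function that lies in ${\bf FSC}(\mathcal{T})$ by Theorem 1, hence in ${\bf FSC}_n(\mathcal{T})$, and evaluating it on arguments $\mathbf{c}^1,\ldots,\mathbf{c}^d$ (padding the remaining slots with any tuple of $R'$) reproduces the forbidden value, contradicting that $(R',S')$ is satisfied by all of ${\bf FSC}_n(\mathcal{T})$. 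Thus $(R',S')\in{\bf CSF}({\bf FSC}(\mathcal{T}))=\mathcal{T}$, and $n$-local closure delivers $(R,S)\in\mathcal{T}$.

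The step I expect to require the most care is the \emph{ordering} of the two reductions: one must invoke $n$-local closure first, to cut the antecedent down to size at most $n$, \emph{before} attempting the compression, since it is precisely the bound $|R'|\le n$ that forces any violating evaluation to use at most $n$ distinct tuples and hence to be reproducible by an $n$-ary function. A more direct attempt to realize these tight small-antecedent constraints as explicit conjunctive minors of members of $\mathcal{T}$ seems considerably more laborious; the variable-substitution argument is what lets me bypass it, and I would need the conjunctive-minor calculus only for the routine remarks that relaxations are conjunctive minors (used to propagate satisfaction to relaxations) and for the degenerate case $R'=\emptyset$, where no violation exists and the relaxation is trivially in $\mathcal{T}$.
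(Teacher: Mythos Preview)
Your proposal is correct and follows essentially the same route as the paper: both directions defer the non-local-closure conditions to Theorem~2, establish $n$-local closure via the observation that an $n$-ary function uses at most $n$ tuples of the antecedent, and handle $(i)\Rightarrow(ii)$ by first cutting down to a relaxation with antecedent of size at most $n$ and then compressing a separating function of arbitrary arity to arity $n$ by simple variable substitution. The only cosmetic difference is that the paper phrases $(i)\Rightarrow(ii)$ contrapositively (for $(R,S)\notin\mathcal{T}$ produce an $n$-ary separator) whereas you phrase it as an inclusion ${\bf CSF}({\bf FSC}_n(\mathcal{T}))\subseteq\mathcal{T}$; the underlying argument is identical.
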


\begin{proof} To show that $(ii)\Rightarrow (i)$, assume $(ii)$.
From Theorem 2, it follows that $\mathcal{T}$ contains the binary equality constraint,
 the empty constraint, and it is closed under formation of conjunctive minors.
Thus to show that $(ii)\Rightarrow (i)$ holds, we only have to prove that $\mathcal{T}$ 
is $n$-locally closed.
Let $(R,S)$ be an $m$-ary constraint not in $\mathcal{T}$.  
 From $(ii)$, it follows that there is an $n$-ary function $f$ satisfying every constraint in 
$\mathcal{T}$ but not $(R,S)$, i.e. there are ${\bf a}^1,\ldots ,{\bf a}^n \in R$ such that 
$f({\bf a}^1\ldots {\bf a}^n)\not\in S$. Let $F=\{{\bf a}^1,\ldots ,{\bf a}^n \}$. 
 Clearly, the constraint $(F,S)$ is a relaxation of $(R,S)$ with antecedent of size at most $n$, 
which is not satisfied by $f$. Hence $(F,S)$ does not belong to $\mathcal{T}$ .

\bigskip

To prove the implication $(i)\Rightarrow (ii)$, we show that for each constraint $(R,S)$
 not in $\mathcal{T}$, 
there is an $n$-ary function satisfying every constraint in $\mathcal{T}$, but not $(R,S)$.

Suppose that $(R,S)$ does not belong to $\mathcal{T}$.
 Since $\mathcal{T}$ is $n$-locally closed, we know that there is a relaxation $(F,S')$ of $(R,S)$, with 
finite antecedent of size $m\leq n$, which does not belong to $\mathcal{T}$.
Also, by Fact 6 $(b)$ it follows that $\mathcal{T}$ is locally closed.
 Since $\mathcal{T}$ also contains the binary equality constraint,
 the empty constraint, and it is closed under formation of conjunctive minors, it follows from    
Theorem 2 that $\mathcal{T}$ is characterized by some set of $B$-valued functions on $A$.
Let $g$ be a function separating $(F,S')$ from $\mathcal{T}$, i.e. $g$ satisfies every constraint in 
$\mathcal{T}$, but not $(F,S')$. Note that $F$ has size $m\leq n$. 
Thus, by identification of variables and addition of inessential variables, we can obtain from $g$ a
separating function $g'$ of arity $n$, and 
the proof of implication $(ii)\Rightarrow (i)$ is complete. 
\end{proof}

We say that a set $\mathcal{T}$ of relational constraints is 
\emph{closed under arbitrary unions} if $(\cup  _{i\in I}R_i,\cup _{i\in I}S_i)$ is in $\mathcal{T}$,
whenever $(R_i,S_i)_{i\in I}$ is a non-empty family of members of $\mathcal{T}$.
Closure under arbitrary unions is closely related to the notion of $1$-local closure:

\begin{proposition}
If $\mathcal{T}$ is a set of relational constraints closed under taking relaxations,
then $\mathcal{T}$ is closed under arbitrary unions if and only if it is $1$-locally closed.  
\end{proposition}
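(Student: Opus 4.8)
The plan is to prove the two implications directly, using closure under relaxations to trim the $1$-local closure condition down to a handful of ``tightest'' constraints. The first thing I would record is a normalization: since $\mathcal{T}$ is closed under relaxations, a constraint $(R,S)$ meets the hypothesis of $1$-local closure (all of its relaxations with antecedent of size at most $1$ lie in $\mathcal{T}$) exactly when $(\emptyset,S)\in\mathcal{T}$ and $(\{a\},S)\in\mathcal{T}$ for every $a\in R$. The reason is that any relaxation with antecedent of size at most $1$ has the shape $(\emptyset,S')$ or $(\{a\},S')$ with $a\in R$ and $S'\supseteq S$, and each such constraint is itself a relaxation of $(\emptyset,S)$ or of $(\{a\},S)$; so once these tightest ones sit in $\mathcal{T}$, closure under relaxations supplies all the rest.

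For the implication ``closed under arbitrary unions $\Rightarrow$ $1$-locally closed'', I would take a constraint $(R,S)$ all of whose relaxations of antecedent size at most $1$ belong to $\mathcal{T}$, and argue $(R,S)\in\mathcal{T}$. If $R=\emptyset$, then $(R,S)=(\emptyset,S)$ is itself one of those size-$\leq 1$ relaxations and is therefore already in $\mathcal{T}$. If $R\neq\emptyset$, I would form the non-empty family $(\{a\},S)_{a\in R}$, whose members all lie in $\mathcal{T}$ by hypothesis; its componentwise union is $\bigl(\bigcup_{a\in R}\{a\},\ \bigcup_{a\in R}S\bigr)=(R,S)$, so closure under arbitrary unions gives $(R,S)\in\mathcal{T}$.

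For the converse ``$1$-locally closed $\Rightarrow$ closed under arbitrary unions'', I would start from a non-empty family $(R_i,S_i)_{i\in I}$ of members of $\mathcal{T}$, put $R=\bigcup_i R_i$ and $S=\bigcup_i S_i$, and verify the $1$-local closure hypothesis for $(R,S)$. By the normalization above it suffices to show $(\emptyset,S)\in\mathcal{T}$ and $(\{a\},S)\in\mathcal{T}$ for each $a\in R$. Choosing, for a given $a\in R$, an index $i_0$ with $a\in R_{i_0}$, the key point is that $(\{a\},S)$ is a relaxation of the member $(R_{i_0},S_{i_0})$: its antecedent satisfies $\{a\}\subseteq R_{i_0}$, and its consequent satisfies $S\supseteq S_{i_0}$ because $S$ is the union of all the $S_i$. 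Closure under relaxations then places $(\{a\},S)$ in $\mathcal{T}$, and the same reasoning with any fixed index handles $(\emptyset,S)$; hence $1$-local closure forces $(R,S)\in\mathcal{T}$.

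The whole argument is essentially book-keeping, so I do not anticipate a genuine obstacle; the one point requiring care is the alignment of consequents in the converse. The singleton relaxation must carry the \emph{full} union $S$ as its consequent rather than just $S_{i_0}$, and it is precisely the inclusion $S_{i_0}\subseteq S$, combined with closure under relaxations, that makes $(\{a\},S)$ available in $\mathcal{T}$. I would also watch two degenerate points: the family used in the union direction must be non-empty, which is exactly why the case $R=\emptyset$ is split off, and the empty-antecedent relaxation must not be overlooked, since ``antecedent of size at most $1$'' includes size $0$.
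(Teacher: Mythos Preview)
Your proof is correct and follows essentially the same approach as the paper's, which likewise handles the converse by showing $(\{r\},\bigcup_i S_i)\in\mathcal{T}$ via closure under relaxations and then invoking $1$-local closure. You are simply more explicit than the paper about the normalization to the tightest relaxations and about the empty-antecedent case, both of which the paper's terse argument glosses over.
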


\begin{proof}
Clearly, every set of relational constraints closed under arbitrary unions is $1$-locally closed.
For the converse, let $(R_i,S_i)_{i\in I}$ be a non-empty family of members of $\mathcal{T}$.
Since $\mathcal{T}$ is closed under taking relaxations, we have that 
$(\{r\},\cup _{i\in I}S_i)$ belongs to $\mathcal{T}$ for every $r$ in $\cup _{i\in I}R_i$. 
By $1$-local closure, we conclude that
 $(\cup  _{i\in I}R_i,\cup _{i\in I}S_i)$ is in $\mathcal{T}$.  
\end{proof}

Using Proposition 1, we obtain as a particular case of Theorem 6 the following description 
of the sets of constraints characterized by unary functions. 
 
\begin{cor}
Consider arbitrary non-empty sets $A$ and $B$.
Let $\mathcal{T}$ be a set of $A$-to-$B$ relational constraints. Then the following are equivalent:
\begin{itemize}
\item[(i)]$\mathcal{T}$ contains the binary equality constraint and the empty constraint,
 and it is closed under arbitrary unions and closed under formation of conjunctive minors;
\item[(ii)]$\mathcal{T}$ is characterized by some set of unary $B$-valued functions on $A$.
\end{itemize}
\end{cor}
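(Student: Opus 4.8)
The plan is to obtain this as a direct specialization of Theorem 6 to the case $n=1$, using Proposition 1 to rephrase the $1$-local closure condition as closure under arbitrary unions. First I would instantiate Theorem 6 with $n=1$: it states that $\mathcal{T}$ is characterized by some set of unary ($1$-ary) $B$-valued functions on $A$ if and only if $\mathcal{T}$ is $1$-locally closed, contains the binary equality and empty constraints, and is closed under formation of conjunctive minors. So the entire task reduces to showing that, for a set $\mathcal{T}$ satisfying the remaining hypotheses (containment of the two distinguished constraints and closure under conjunctive minors), the condition ``$1$-locally closed'' is equivalent to ``closed under arbitrary unions.''

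To invoke Proposition 1, I must first verify its hypothesis, namely that $\mathcal{T}$ is closed under taking relaxations. This is the one point that needs an explicit argument rather than a bare citation. The key observation is that relaxation is a special case of forming conjunctive minors: as noted in Section 3, a relaxation $(R,S)$ of $(R_0,S_0)$ (with $R\subseteq R_0$ and $S\supseteq S_0$) arises as a (weak) conjunctive minor of the single-member family $\{(R_0,S_0)\}$, using the scheme with empty indeterminate set and target of the appropriate arity. Hence any set $\mathcal{T}$ closed under formation of conjunctive minors is automatically closed under taking relaxations, and Proposition 1 applies to it.

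With the relaxation-closure of $\mathcal{T}$ established, Proposition 1 gives that $\mathcal{T}$ is $1$-locally closed if and only if it is closed under arbitrary unions. Substituting this equivalence into the statement of Theorem 6 (with $n=1$) immediately yields the desired equivalence: $\mathcal{T}$ contains the binary equality and empty constraints, is closed under conjunctive minors, and is closed under arbitrary unions, if and only if $\mathcal{T}$ is characterized by some set of unary $B$-valued functions on $A$. This completes the proof.

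I expect no genuine obstacle here; the proof is essentially a substitution argument. The only subtlety worth stating carefully is the verification that closure under conjunctive minors entails closure under relaxations, since Proposition 1 is phrased with relaxation-closure as a standing hypothesis and the Corollary does not list it among its conditions. Making that implication explicit is what legitimizes the passage from ``$1$-locally closed'' (as it appears in Theorem 6) to ``closed under arbitrary unions'' (as stated in the Corollary).
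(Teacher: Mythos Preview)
Your proposal is correct and follows exactly the approach the paper intends: the paper presents Corollary~2 with no separate proof, simply stating ``Using Proposition~1, we obtain as a particular case of Theorem~6 the following description\ldots''. Your write-up fills in precisely the needed details---instantiating Theorem~6 at $n=1$, and invoking Proposition~1 after noting that closure under conjunctive minors subsumes closure under relaxations (as the paper observes in Section~3)---so there is nothing to add or correct.
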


The closure operators associated with the Galois connection ${\bf FSC}_n-{\bf CSF}$,
have decompositions analogous to those given in Theorem 4.
To establish them, one needs the following (statement $(b)$ in Theorem 7 below is the analogue of
 Proposition 3.8 $(ii)$ in [P\" o2] concerning sets of relations):

\begin{theorem}
Consider arbitrary non-empty sets $A$ and $B$, and for a positive integer
$n$, let $\mathcal{K}_n\subseteq B^{A^n}$
be a class of $n$-ary functions, and $\mathcal{T}$ 
 a set of $A$-to-$B$ relational constraints. The following hold:
\begin{itemize}
\item[(a)] If $\mathcal{K}_n={\bf VS}_n(\mathcal{K}_n)$, 
 then ${\bf VS}_n({\bf Lo}(\mathcal{K}_n))={\bf Lo}(\mathcal{K}_n)$;
\item[(b)] If ${\bf CM}(\mathcal{T})=\mathcal{T}$, then
${\bf CM}({\bf LO}_n(\mathcal{T}))={\bf LO}_n(\mathcal{T})$.
 \end{itemize}
\end{theorem}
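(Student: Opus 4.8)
The plan is to read Theorem 7 as the arity-restricted counterpart of Theorem 3, proving $(a)$ by leveraging Theorem 3$(a)$ together with Fact 5, and proving $(b)$ by rerunning the tight-minor argument from the proof of Theorem 3$(b)$ while carefully bookkeeping the sizes of the antecedents that arise. In both parts extensivity of the relevant closure operator supplies one inclusion immediately, so the substance lies in the reverse inclusion.

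For $(a)$, I would first record the structural observation that, since every member of $\mathcal{K}_n$ is $n$-ary, the local closure ${\bf Lo}(\mathcal{K}_n)$ again consists only of $n$-ary functions: a function $g$ can enter ${\bf Lo}(\mathcal{K}_n)$ only if its restriction to each finite subset of its domain matches a restriction of a member of $\mathcal{K}_n$, which forces $g$ to be $n$-ary, so ${\bf Lo}(\mathcal{K}_n)\subseteq B^{A^n}$. Then I set $\mathcal{K}={\bf VS}(\mathcal{K}_n)$; since ${\bf VS}$ is idempotent we have ${\bf VS}(\mathcal{K})=\mathcal{K}$, so Theorem 3$(a)$ gives ${\bf VS}({\bf Lo}(\mathcal{K}))={\bf Lo}(\mathcal{K})$. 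Monotonicity of ${\bf Lo}$ and ${\bf VS}$ applied to $\mathcal{K}_n\subseteq\mathcal{K}$ then yields ${\bf VS}({\bf Lo}(\mathcal{K}_n))\subseteq{\bf VS}({\bf Lo}(\mathcal{K}))={\bf Lo}({\bf VS}(\mathcal{K}_n))$. Intersecting with $B^{A^n}$ and invoking Fact 5 together with the hypothesis ${\bf VS}_n(\mathcal{K}_n)=\mathcal{K}_n$ gives ${\bf VS}_n({\bf Lo}(\mathcal{K}_n))=B^{A^n}\cap{\bf VS}({\bf Lo}(\mathcal{K}_n))\subseteq B^{A^n}\cap{\bf Lo}({\bf VS}(\mathcal{K}_n))={\bf Lo}({\bf VS}_n(\mathcal{K}_n))={\bf Lo}(\mathcal{K}_n)$; the reverse inclusion is extensivity of ${\bf VS}_n$, so equality follows. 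One could alternatively mimic the finite-subset argument of Theorem 3$(a)$ directly, using closure of $\mathcal{K}_n$ under $n$-ary simple variable substitutions in place of the hypothesis ${\bf VS}(\mathcal{K})=\mathcal{K}$.

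For $(b)$, since ${\bf CM}(\mathcal{T})=\mathcal{T}$ the set $\mathcal{T}$ contains the binary equality and empty constraints, hence so does ${\bf LO}_n(\mathcal{T})$; thus ${\bf CM}({\bf LO}_n(\mathcal{T}))$ is exactly the set of conjunctive minors of non-empty families of constraints drawn from ${\bf LO}_n(\mathcal{T})$, and it suffices to show each such minor lies in ${\bf LO}_n(\mathcal{T})$. Given a conjunctive minor $(R,S)$ of a family $(R_j,S_j)_{j\in J}$ in ${\bf LO}_n(\mathcal{T})$ via a scheme $H=(h_j)_{j\in J}$ with indeterminate set $V$, I pass to the tight conjunctive minor $(R_0,S_0)$ via the same $H$; as in the proof of Theorem 3$(b)$ we have $R\subseteq R_0$ and $S\supseteq S_0$, so every relaxation of $(R,S)$ with antecedent of size at most $n$ is likewise a relaxation of $(R_0,S_0)$, and it is enough to show every relaxation $(F,S')$ of $(R_0,S_0)$ with $|F|\leq n$ lies in $\mathcal{T}$. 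For such an $(F,S')$, tightness of $R_0$ supplies for each ${\bf a}_i\in F$ a Skolem map $\sigma_i\colon V\rightarrow A$ with $({\bf a}_i+\sigma_i)h_j\in R_j$ for all $j$, and I set $F_j=\{({\bf a}_i+\sigma_i)h_j\mid {\bf a}_i\in F\}$.

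The crucial point, and the only place where the parameter $n$ actually enters, is the size bound $|F_j|\leq|F|\leq n$, which holds because $F_j$ is the image of $F$ under ${\bf a}_i\mapsto({\bf a}_i+\sigma_i)h_j$. Consequently each $(F_j,S_j)$ is a relaxation of $(R_j,S_j)$ with antecedent of size at most $n$, so membership $(R_j,S_j)\in{\bf LO}_n(\mathcal{T})$ forces $(F_j,S_j)\in\mathcal{T}$ by the definition of $n$-local closure. Finally, exactly as in Theorem 3$(b)$, $(F,S')$ is a conjunctive minor of the family $(F_j,S_j)_{j\in J}$ (using the maps $\sigma_i$ for the restrictive condition on $F$, and $S'\supseteq S_0$ for the extensive condition on the consequent), whence $(F,S')\in{\bf CM}(\mathcal{T})=\mathcal{T}$. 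I expect the main obstacle to be precisely this size bookkeeping: in the unrestricted Theorem 3$(b)$ the antecedents $F_j$ may be arbitrarily large, and the present argument goes through only because $|F_j|$ is controlled by $|F|\leq n$, which is exactly what makes $n$-local closure, rather than plain local closure, the correct hypothesis.
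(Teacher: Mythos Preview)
Your proposal is correct and follows essentially the same route as the paper. For $(a)$ you reduce to Theorem~3$(a)$ via ${\bf VS}(\mathcal{K}_n)$ and then use Fact~5 to cut back down to arity $n$, which is exactly the paper's argument; for $(b)$ you rerun the tight-minor argument from Theorem~3$(b)$ with the additional bookkeeping $|F_j|\le|F|\le n$, which is precisely the ``key observation'' the paper singles out. One small remark: when you claim that $(R_j,S_j)\in{\bf LO}_n(\mathcal{T})$ forces $(F_j,S_j)\in\mathcal{T}$ ``by the definition of $n$-local closure'', you are implicitly using ${\bf CM}(\mathcal{T})=\mathcal{T}$ (hence closure of $\mathcal{T}$ under relaxations) to handle the case $(R_j,S_j)\in\mathcal{T}$; this is harmless but worth making explicit.
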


\begin{proof}
First we prove $(a)$. By $(a)$ of Theorem 3 it follows that
\begin{displaymath}
{\bf VS}({\bf Lo}({\bf VS}(\mathcal{K}_n)))={\bf Lo}({\bf VS}(\mathcal{K}_n)) 
\end{displaymath} 
and therefore 
\begin{displaymath}
B^{A^n}\cap {\bf VS}({\bf Lo}({\bf VS}(\mathcal{K}_n)))=B^{A^n}\cap {\bf Lo}({\bf VS}(\mathcal{K}_n)).  
\end{displaymath}
Clearly, ${\bf VS}_n({\bf Lo}(\mathcal{K}_n))\subseteq B^{A^n}\cap {\bf VS}({\bf Lo}({\bf VS}(\mathcal{K}_n)))$.
By Fact 5,
\begin{displaymath}
B^{A^n}\cap {\bf Lo}({\bf VS}(\mathcal{K}_n))={\bf Lo}({\bf VS}_n(\mathcal{K}_n))
\end{displaymath}
and since $\mathcal{K}_n={\bf VS}_n(\mathcal{K}_n)$, we have 
${\bf Lo}({\bf VS}_n(\mathcal{K}_n))={\bf Lo}(\mathcal{K}_n)$. Hence,
\begin{displaymath}
{\bf VS}_n({\bf Lo}(\mathcal{K}_n))\subseteq {\bf Lo}(\mathcal{K}_n)\subseteq {\bf VS}_n({\bf Lo}(\mathcal{K}_n))
\end{displaymath}
 i.e. ${\bf VS}_n({\bf Lo}(\mathcal{K}_n))={\bf Lo}(\mathcal{K}_n)$. 
   
A proof of $(b)$ in Theorem 7 is obtained essentially by replacing, in the proof of $(b)$ of Theorem 3,
${\bf LO}$ by ${\bf LO}_n$, and ``finite relaxation" by ``finite relaxation with antecedent of size at most $n$". The key observation is that
$\mid F_j\mid \leq \mid F\mid \leq n$.
\end{proof}

 From Theorem 5, Theorem 6 and Theorem 7, we obtain factorizations of the closure operators
 ${\bf FSC}_n\circ {\bf CSF}$ and ${\bf CSF}\circ {\bf FSC}_n$, as compositions of the operators
 ${\bf Lo}$ and ${\bf VS}_n$, and 
${\bf LO}_n$ and ${\bf CM}$, respectively:

\begin{theorem}
Consider arbitrary non-empty sets $A$ and $B$, and let $n$ be a positive integer.
 For any class of  $n$-ary functions $\mathcal{K}_n\subseteq B^{A^n}$ and 
any set $\mathcal{T}$ of $A$-to-$B$ relational constraints, the following hold:
\begin{itemize}
\item[(i)] ${\bf FSC}_n({\bf CSF}(\mathcal{K}_n))={\bf Lo}({\bf VS}_n(\mathcal{K}_n))$, and
\item[(ii)] ${\bf CSF}({\bf FSC}_n(\mathcal{T}))={\bf LO}_n({\bf CM}(\mathcal{T}))$.
\end{itemize}
\end{theorem}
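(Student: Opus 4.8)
The plan is to read both identities as factorizations of one closure operator: for the arity-restricted connection ${\bf FSC}_n-{\bf CSF}$, the left-hand sides are the Galois closures of $\mathcal{K}_n$ and of $\mathcal{T}$ respectively, and the right-hand sides are composites of two of the simpler operators from Sections 3 and 4. To prove that such a composite $\Phi$ coincides with the Galois closure $\Gamma$, I would establish two inclusions. For $\Gamma\subseteq\Phi$, I would check that $\Phi$ always returns a Galois-closed set (via the relevant characterization theorem), so that the least closed set over the argument is contained in it. For $\Phi\subseteq\Gamma$, I would use that $\Gamma$ of the argument is itself Galois-closed, hence satisfies the closure conditions of the characterization theorem, and therefore must already contain everything the two factors of $\Phi$ adjoin.

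For part $(i)$ I would first note the short route. Since ${\bf FSC}_n(\mathcal{T})=B^{A^n}\cap{\bf FSC}(\mathcal{T})$ by definition,
\begin{displaymath}
{\bf FSC}_n({\bf CSF}(\mathcal{K}_n))=B^{A^n}\cap{\bf FSC}({\bf CSF}(\mathcal{K}_n))=B^{A^n}\cap{\bf Lo}({\bf VS}(\mathcal{K}_n)),
\end{displaymath}
the last equality being Theorem 4$(i)$ applied to the class $\mathcal{K}_n$; by Fact 5 this equals ${\bf Lo}({\bf VS}_n(\mathcal{K}_n))$, which is the claim. To keep the argument parallel with $(ii)$ I could instead reason intrinsically: ${\bf Lo}({\bf VS}_n(\mathcal{K}_n))$ is locally closed by construction and is closed under $n$-ary simple variable substitutions by Theorem 7$(a)$ (applied to the ${\bf VS}_n$-closed class ${\bf VS}_n(\mathcal{K}_n)$), hence is Galois-closed by Theorem 5, yielding $\Gamma(\mathcal{K}_n)\subseteq{\bf Lo}({\bf VS}_n(\mathcal{K}_n))$; the reverse inclusion follows because $\Gamma(\mathcal{K}_n)$ is, again by Theorem 5, locally closed and ${\bf VS}_n$-closed and contains $\mathcal{K}_n$.

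For part $(ii)$ the short route is unavailable: restricting to $n$-ary functions before applying ${\bf CSF}$ discards functions and thus enlarges the satisfied-constraint set, so ${\bf CSF}({\bf FSC}_n(\mathcal{T}))$ is not a slice of ${\bf CSF}({\bf FSC}(\mathcal{T}))$. I would therefore argue via the characterization directly. The set ${\bf LO}_n({\bf CM}(\mathcal{T}))$ contains the binary equality and empty constraints and is $n$-locally closed by construction, and it is closed under formation of conjunctive minors by Theorem 7$(b)$ (applied to the ${\bf CM}$-closed set ${\bf CM}(\mathcal{T})$); by Theorem 6 it is then characterized by some set of $n$-ary functions, i.e. Galois-closed, giving ${\bf CSF}({\bf FSC}_n(\mathcal{T}))\subseteq{\bf LO}_n({\bf CM}(\mathcal{T}))$. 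Conversely, ${\bf CSF}({\bf FSC}_n(\mathcal{T}))$ is Galois-closed and contains $\mathcal{T}$, hence by Theorem 6 is closed under conjunctive minors and $n$-locally closed; so it contains ${\bf CM}(\mathcal{T})$ and then ${\bf LO}_n({\bf CM}(\mathcal{T}))$, which closes the loop.

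The step I expect to be the crux is the permanence of the inner closure under the outer one: that ${\bf Lo}$ does not destroy closure under $n$-ary simple variable substitutions, and that ${\bf LO}_n$ does not destroy closure under formation of conjunctive minors. These are exactly Theorem 7$(a)$ and $(b)$, and they are what license the use of the characterization Theorems 5 and 6 to certify that the right-hand composites are genuinely Galois-closed. Granting them, the remaining inclusions reduce to the routine fact that a Galois closure is the least closed set over its argument, trapped between the two displayed containments.
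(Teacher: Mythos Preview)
Your proposal is correct and matches the paper's approach: the paper does not spell out a proof of this theorem but simply states that it follows from Theorems 5, 6, and 7, and your intrinsic argument is exactly the standard way to cash that out (show the right-hand composite is Galois-closed via Theorem 7 plus the characterization, then trap the Galois closure between the two inclusions). Your additional ``short route'' for $(i)$ via Theorem 4$(i)$ and Fact 5 is a valid and slightly slicker alternative for that half, and your observation that no analogous shortcut exists for $(ii)$ is well taken.
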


\subsection{Restricting constraint arities}

We now consider arity restrictions on sets of relational constraints.
 First we determine necessary and sufficient
closure conditions for function class definability by sets of constraints of fixed arity.
The following parameterized notion of local closure corresponds to that appearing in [P$\ddot o$2], for
operations on a given set. 
For a positive integer $m$, a class $\mathcal{K}$ of $B$-valued functions on $A$
 is said to be $m$-\emph{locally closed} if for every $B$-valued function $f$ on $A$
 the following holds: if every restriction of $f$ to a finite subset $D\subseteq A^n$ of size at most $m$,
 coincides with the restriction to $D$ of some member of $\mathcal{K}$, then $f$ belongs to $\mathcal{K}$.
(See [FH1] and [FH2] for two different but somewhat related notions of $m$-local closure defined 
on classes of \emph{pseudo-Boolean functions}, i.e. maps of the form $\{0,1\}^n\rightarrow {\bf R}$, where
${\bf R}$ denotes the field of real numbers.)
 For any class of functions $\mathcal{K}\subseteq \cup _{n\geq 1}B^{A^n}$
  the smallest $m$-locally closed class of functions containing $\mathcal{K}$, 
which we denote by ${\bf Lo}_m(\mathcal{K})$, is called the \emph{$m$-local closure} of $\mathcal{K}$,
and it is the class obtained from 
$\mathcal{K}$ by adding all those functions whose restriction to each subset of
its domain $A^n$ of size at most $m$ coincides with a restriction of some member of $\mathcal{K}$.
The following summarizes some immediate consequences of the definitions and the above observations.
\begin{fact}
Consider arbitrary non-empty sets $A$ and $B$, and let $m$ be a positive integer.
\begin{itemize}
\item[(a)] The operator $\mathcal{K}\mapsto {\bf Lo}_m(\mathcal{K})$ is a closure operator on $\cup _{n\geq 1}B^{A^n}$.
\item[(b)] For any class $\mathcal{K}\subseteq \cup _{n\geq 1}B^{A^n}$, we have
 ${\bf Lo}_n(\mathcal{T})\subseteq {\bf Lo}_m(\mathcal{T})$ whenever $n\geq m$, and 
${\bf Lo}(\mathcal{K})=\cap _{n\geq 1}{\bf Lo}_n(\mathcal{K})$.
Thus every $m$-locally closed class of functions is in particular locally closed.
\end{itemize}
\end{fact}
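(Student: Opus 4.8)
The plan is to argue everything directly from the explicit one-step description of the operator recorded just above the statement: for $\mathcal{K}\subseteq\cup_{n\geq1}B^{A^n}$, the class ${\bf Lo}_m(\mathcal{K})$ consists of exactly those functions $f$, say of arity $n$, such that every restriction of $f$ to a subset $D\subseteq A^n$ with $|D|\leq m$ agrees with the restriction to $D$ of some member of $\mathcal{K}$ (necessarily of the same arity $n$, since only then does the comparison on $D$ make sense). For part $(a)$, extensivity $\mathcal{K}\subseteq{\bf Lo}_m(\mathcal{K})$ is immediate, because each $f\in\mathcal{K}$ agrees with itself on every admissible $D$; monotonicity is equally direct, since if $\mathcal{K}\subseteq\mathcal{K}'$ and $f\in{\bf Lo}_m(\mathcal{K})$, then every local restriction of $f$ already matches a restriction of a member of $\mathcal{K}\subseteq\mathcal{K}'$, whence $f\in{\bf Lo}_m(\mathcal{K}')$.

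The only step carrying genuine content is idempotency, so I expect it to be the main point of the proof. Since the reverse inclusion ${\bf Lo}_m(\mathcal{K})\subseteq{\bf Lo}_m({\bf Lo}_m(\mathcal{K}))$ is extensivity, what must be shown is ${\bf Lo}_m({\bf Lo}_m(\mathcal{K}))\subseteq{\bf Lo}_m(\mathcal{K})$, and I would establish it by a transitivity-of-local-agreement argument. Fix $f\in{\bf Lo}_m({\bf Lo}_m(\mathcal{K}))$ and a subset $D\subseteq A^n$ with $|D|\leq m$. By definition there is $g\in{\bf Lo}_m(\mathcal{K})$ whose restriction to $D$ equals that of $f$; but because $|D|\leq m$, the membership $g\in{\bf Lo}_m(\mathcal{K})$ supplies, on this very set $D$, a member $h\in\mathcal{K}$ whose restriction to $D$ equals that of $g$, hence of $f$. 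Thus $f$ agrees with a member of $\mathcal{K}$ on every $D$ of size at most $m$, so $f\in{\bf Lo}_m(\mathcal{K})$. The observation that makes the argument close is that the same bound $m$ governs both closure steps, so the single test set $D$ can be reused to resolve the intermediate function $g$ all the way down to $\mathcal{K}$; it is precisely here that the parametrization matters.

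For part $(b)$, the inclusion ${\bf Lo}_n(\mathcal{K})\subseteq{\bf Lo}_m(\mathcal{K})$ for $n\geq m$ holds because every subset of size at most $m$ is in particular a subset of size at most $n$, so the condition defining ${\bf Lo}_n$ entails the weaker condition defining ${\bf Lo}_m$. For the identity ${\bf Lo}(\mathcal{K})=\cap_{n\geq1}{\bf Lo}_n(\mathcal{K})$ I would simply unwind both sides: $f$ lies in the intersection iff for every $n$ each restriction of $f$ to a set of size at most $n$ matches a member of $\mathcal{K}$, and since each finite subset of $A^n$ has size at most $n$ for $n$ equal to its cardinality, this is exactly the condition ``every restriction of $f$ to a finite subset matches a member of $\mathcal{K}$'' defining ${\bf Lo}(\mathcal{K})$. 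Finally, the concluding remark is formal: if $\mathcal{K}$ is $m$-locally closed then ${\bf Lo}_m(\mathcal{K})=\mathcal{K}$, whence the intersection formula gives ${\bf Lo}(\mathcal{K})\subseteq{\bf Lo}_m(\mathcal{K})=\mathcal{K}$, and extensivity of ${\bf Lo}$ (Fact 2) yields the reverse inclusion, so ${\bf Lo}(\mathcal{K})=\mathcal{K}$; that is, $\mathcal{K}$ is locally closed.
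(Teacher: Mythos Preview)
Your proof is correct and fills in precisely the details that the paper leaves implicit; the paper itself gives no proof of this Fact, merely introducing it as ``immediate consequences of the definitions and the above observations,'' so your direct verification from the one-step description of ${\bf Lo}_m$ is exactly the intended argument. One small point of presentation: in your treatment of the identity ${\bf Lo}(\mathcal{K})=\cap_{n\geq1}{\bf Lo}_n(\mathcal{K})$ you overload the symbol $n$ (once as the arity of $f$ in $A^n$, once as the closure parameter), which momentarily obscures the otherwise clean argument; using a separate letter for the arity would make the reasoning read more smoothly, though the paper's own statement of the Fact commits the same overload.
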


As in the case of sets of relational constraints, it turns out that this parametrized notion of local closure,
together with the conditions given by Theorem 2, suffices to characterize the classes of functions definable 
by sets of constraints of fixed arities.

\begin{theorem}
Consider arbitrary non-empty sets $A$ and $B$ and let $m$ be a positive integer.
 For a class of functions $\mathcal{K}\subseteq \cup _{n\geq 1}B^{A^n}$ 
the following conditions are equivalent:
\begin{itemize}
\item[(i)] $\mathcal{K}$ is $m$-locally closed and it is closed under simple variable substitutions;
\item[(ii)] $\mathcal{K}$ is definable by some set of $A$-to-$B$ $m$-ary constraints.
\end{itemize}
\end{theorem}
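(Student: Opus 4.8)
The plan is to prove both implications, drawing on the general-arity result (Theorem 1) to supply closure under simple variable substitutions and adapting the local-closure bookkeeping to the parameter $m$. For $(ii)\Rightarrow(i)$, suppose $\mathcal{K}={\bf FSC}(\mathcal{T})$ for a set $\mathcal{T}$ of $m$-ary $A$-to-$B$ constraints. Being definable by constraints, $\mathcal{K}$ is closed under simple variable substitutions by Theorem 1, so only $m$-local closure remains. The key observation is that deciding whether an $n$-ary $f$ satisfies a single $m$-ary constraint $(R,S)\in\mathcal{T}$ inspects $f$ only at the at-most-$m$ points ${\bf e}_i=(({\bf a}^1)_i,\ldots,({\bf a}^n)_i)$, $i\in{\bf m}$, determined by a choice of ${\bf a}^1,\ldots,{\bf a}^n\in R$. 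Hence, if $f$ agrees with some $g\in\mathcal{K}$ on the finite set $D=\{{\bf e}_1,\ldots,{\bf e}_m\}$ of size at most $m$, then $f({\bf a}^1,\ldots,{\bf a}^n)=g({\bf a}^1,\ldots,{\bf a}^n)\in S$; ranging over all constraints in $\mathcal{T}$ and all admissible tuples shows that $f$ satisfies $\mathcal{T}$, so $f\in\mathcal{K}$ and $\mathcal{K}$ is $m$-locally closed.

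For $(i)\Rightarrow(ii)$ I would take the canonical candidate $\mathcal{T}={\bf CSF}_m(\mathcal{K})$, a set of $m$-ary constraints. Since every member of $\mathcal{K}$ satisfies every constraint in ${\bf CSF}_m(\mathcal{K})$ by definition, extensivity gives $\mathcal{K}\subseteq{\bf FSC}(\mathcal{T})$, and it remains to prove the reverse inclusion ${\bf FSC}(\mathcal{T})\subseteq\mathcal{K}$. I would argue contrapositively: given an $n$-ary $f\notin\mathcal{K}$, I must construct an $m$-ary constraint lying in $\mathcal{T}$ that $f$ violates. Because $\mathcal{K}$ is $m$-locally closed, failure of membership supplies a finite set $D=\{{\bf d}_1,\ldots,{\bf d}_k\}\subseteq A^n$ with $k\le m$ on which $f$ agrees with no member of $\mathcal{K}$.

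The construction encodes $D$ as an $m$-ary antecedent. For each coordinate $l\in{\bf n}$, let ${\bf a}^l\in A^m$ be the tuple whose $i$-th entry is the $l$-th coordinate of ${\bf d}_i$ for $i\le k$, with the remaining $m-k$ entries padded by repeating the coordinates of ${\bf d}_1$; set $R=\{{\bf a}^1,\ldots,{\bf a}^n\}$ and $S=\mathcal{K}R$. Then $(R,S)$ is $m$-ary and satisfied by every $g\in\mathcal{K}$, since $gR\subseteq\mathcal{K}R=S$, so $(R,S)\in{\bf CSF}_m(\mathcal{K})=\mathcal{T}$. The crux is to verify $f({\bf a}^1,\ldots,{\bf a}^n)\notin S$: its first $k$ coordinates are exactly $f({\bf d}_1),\ldots,f({\bf d}_k)$, so any equality $f({\bf a}^1,\ldots,{\bf a}^n)=g({\bf a}^{\pi(1)},\ldots,{\bf a}^{\pi(p)})$ with $g\in\mathcal{K}$ of arity $p$ and $\pi:{\bf p}\rightarrow{\bf n}$ would force the function $g'=g(p_{\pi(1)},\ldots,p_{\pi(p)})\in\mathcal{K}$ to satisfy $g'({\bf d}_i)=f({\bf d}_i)$ for all $i\le k$, i.e.\ $g'|_D=f|_D$, contradicting the choice of $D$. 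Thus $f$ violates $(R,S)$ and $f\notin{\bf FSC}(\mathcal{T})$.

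I expect this last separation step to be the main obstacle, and it is precisely here that closure under simple variable substitutions is indispensable: the witness $g$ extracted from membership in $S=\mathcal{K}R$ may have arity $p\neq n$, and it must be transported back to an $n$-ary member $g'$ of $\mathcal{K}$ agreeing with $f$ on $D$. The only remaining bookkeeping nuisance is the padding to arity exactly $m$ when $k<m$, which is harmless since the padded coordinates merely repeat data already present. Assembling the two implications, with $\mathcal{T}={\bf CSF}_m(\mathcal{K})$ as the witnessing set of $m$-ary constraints, yields the desired equivalence.
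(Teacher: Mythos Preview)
Your proof is correct and follows essentially the same strategy as the paper's: both directions hinge on the fact that an $m$-ary constraint tests a function only on a subset of its domain of size at most $m$, and the separating constraint in $(i)\Rightarrow(ii)$ is built by transposing the witnessing set $D$ into an $m$-ary antecedent. The only cosmetic difference is that the paper takes the consequent to be $S=\{h({\bf a}^1\ldots{\bf a}^n):h\in\mathcal{K}_n\}$ (using only the $n$-ary members of $\mathcal{K}$) and then invokes closure under simple variable substitutions to show that \emph{all} of $\mathcal{K}$ satisfies $(R,S)$, whereas you take $S=\mathcal{K}R$ (using all arities) and invoke the same closure to show that $f$ \emph{violates} $(R,S)$; these are dual bookkeeping choices and neither buys anything over the other.
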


\begin{proof}
To prove the implication $(ii)\Rightarrow (i)$, assume $(ii)$. From Theorem 1,
it follows that $\mathcal{K}$ is closed under simple variable substitutions. 
 To see that $\mathcal{K}$ is $m$-locally closed, let $f$ be an $n$-ary function not in $\mathcal{K}$, 
and let $(R,S)$ be an $A$-to-$B$ $m$-ary constraint satisfied by every function $g$ in $\mathcal{K}$ but
 not satisfied by $f$.
 Hence, for some ${\bf a}^1,\ldots ,{\bf a}^n \in R$, we have 
$f({\bf a}^1\ldots {\bf a}^n)\not\in S$, and $g({\bf a}^1\ldots {\bf a}^n)\in S$,
 for every $n$-ary function $g$ in $\mathcal{K}$.
 Let $F=\{({\bf a}^1(i),\ldots ,{\bf a}^n (i)):i\in {\bf m}\}$.
Clearly, the restriction of $f$ to the set $F$, which has size at most $m$, does not coincide
 with that of any member of $\mathcal{K}$.

\bigskip

Now we prove the implication $(i)\Rightarrow (ii)$. If $\mathcal{K}=\emptyset $,
 then the single constraint $(A^m,\emptyset )$ clearly
defines $\mathcal{K}$. Hence, we may assume that $\mathcal{K}$ is non-empty.
 Consider a function $g\not\in \mathcal{K}$, say of arity $n$.
Thus there is a restriction $g_F$ of $g$ to a non-empty finite subset $F\subseteq A^n$ of size $p\leq m$
 which does not agree with any function in $\mathcal{K}$ restricted to $F$. 

Let ${\bf a}^1,\ldots ,{\bf a}^n$ be any $m$-tuples in $A^m$,
such that $F=\{({\bf a}^1(i),\ldots ,{\bf a}^n (i)):i\in {\bf m}\}$.
Let $(R,S)$ be the $m$-ary constraint whose antecedent is 
$R=\{{\bf a}^1,\ldots ,{\bf a}^n\}$, and whose consequent is given by
$S=\{f({\bf a}^1\ldots {\bf a}^n):f\in \mathcal{K}_n\}$, where $\mathcal{K}_n$
denotes the set of $n$-ary functions in $\mathcal{K}$.   
It follows from the definition of $R$ and $S$ that $(R,S)$ is an $A$-to-$B$ 
$m$-ary constraint, that $g$ does not satisfy $(R,S)$,
 and, since $\mathcal{K}$ is closed under simple variable substitutions,
 that every function in $\mathcal{K}$ satisfies $(R,S)$. 
\end{proof}

Now we describe the closed sets of constraints of fixed arities characterized by the 
functions of several variables satisfying them. 
Let $\mathcal{T}_m$ be a set of $A$-to-$B$ $m$-ary relational constraints. 
We say that $\mathcal{T}_m$ is \emph{closed under formation of $m$-ary conjunctive minors}
if whenever every member of a non-empty family $(R_j,S_j)_{j\in J}$ of constraints
 is in $\mathcal{T}_m$, all $m$-ary conjunctive minors of the
 family are also in $\mathcal{T}_m$.  

\bigskip

For a positive integer $m$, we refer to the constraint whose antecedent and consequent consists of all
$m$-tuples with all arguments equal, as the \emph{$m$-ary equality constraint}. Note that, for $2\leq m$,
the $m$-ary equality constraint is a tight conjunctive minor of a family of constraints with 
$m-1$ binary equality constraints, and, for $m>1$, the binary equality constraint is 
a tight conjunctive minor of the $m$-ary equality constraint.  
For any set $\mathcal{T}_m$ of $m$-ary constraints, let ${\bf CM}_m(\mathcal{T}_m)$ denote the smallest 
set of constraints containing $\mathcal{T}_m$, closed under formation of $m$-ary conjunctive minors,
 and containing the $m$-ary equality constraint and the empty constraint.
By the Transitivity Lemma it follows that ${\bf CM}_m(\mathcal{T}_m)=\mathcal{Q}_m\cap {\bf CM}(\mathcal{T}_m)$,
where $\mathcal{Q}_m$ denotes the set of all $A$-to-$B$ $m$-ary relational constraints.

\begin{lemma} Consider arbitrary non-empty sets $A$ and $B$. 
For any set $\mathcal{T}_m$ of $m$-ary constraints, 
  \begin{displaymath}
 \mathcal{Q}_m\cap {\bf LO}({\bf CM}(\mathcal{T}_m))={\bf LO}({\bf CM}_m(\mathcal{T}_m)),
\end{displaymath} 
where $\mathcal{Q}_m$ denotes the set of all $A$-to-$B$ $m$-ary relational constraints. 
\end{lemma} 

\begin{proof}
It is easy to verify that for any set $\mathcal{T}$ of relational constraints,
  \begin{displaymath}
 \mathcal{Q}_m\cap {\bf LO}(\mathcal{T})=
{\bf LO}(\mathcal{Q}_m\cap \mathcal{T}).
\end{displaymath} 
 By the remark preceding the lemma, it follows that for any set $\mathcal{T}_m$ of $m$-ary constraints, 
  \begin{displaymath}
 \mathcal{Q}_m\cap {\bf LO}({\bf CM}(\mathcal{T}_m))=
{\bf LO}(\mathcal{Q}_m\cap {\bf CM}(\mathcal{T}_m))={\bf LO}({\bf CM}_m(\mathcal{T}_m)).
\end{displaymath} 
\end{proof}

From the above definitions, one can easily verify that the following also holds:

\begin{fact}
Consider arbitrary non-empty sets $A$ and $B$. 
If $\mathcal{T}$ is a locally closed set of $A$-to-$B$ relational constraints,  
closed under formation of conjunctive minors, and 
$\mathcal{T}_m$ is the set of all $m$-ary contraints in $\mathcal{T}$, then 
$\mathcal{T}_m$ is locally closed, and 
closed under formation of $m$-ary conjunctive minors. 
\end{fact}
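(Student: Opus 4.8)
The plan is to treat this as the constraint-side counterpart of the observation preceding Fact 5, deriving both assertions from the single identity $\mathcal{T}_m=\mathcal{Q}_m\cap\mathcal{T}$ together with the fact that both relaxation and formation of conjunctive minors interact transparently with arity. The two crucial arity remarks are: (1) a relaxation $(R',S')$ of an $m$-ary constraint is again $m$-ary, since $R'\subseteq R\subseteq A^m$ and $S'\subseteq B^m$; and (2) an $m$-ary conjunctive minor is, by definition, a conjunctive minor whose target is $\mathbf{m}$, hence in particular a conjunctive minor. Consequently, for any $m$-ary constraint $(R,S)$, membership in $\mathcal{T}_m$ is equivalent to membership in $\mathcal{T}$.

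For closure under formation of $m$-ary conjunctive minors, I would start from a non-empty family $(R_j,S_j)_{j\in J}$ all of whose members lie in $\mathcal{T}_m$, and let $(R,S)$ be any $m$-ary conjunctive minor of that family. Since $\mathcal{T}_m\subseteq\mathcal{T}$, every $(R_j,S_j)$ belongs to $\mathcal{T}$, and by remark (2) the constraint $(R,S)$ is a conjunctive minor of this family; closure of $\mathcal{T}$ under formation of conjunctive minors then gives $(R,S)\in\mathcal{T}$. As $(R,S)$ is $m$-ary, it lies in $\mathcal{Q}_m\cap\mathcal{T}=\mathcal{T}_m$, which is exactly what is required.

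For local closure of $\mathcal{T}_m$, I would verify the defining condition for an arbitrary constraint $(R,S)$ whose finite relaxations are all contained in $\mathcal{T}_m$. If $(R,S)$ has arity $k\neq m$, then its finite relaxation $(\emptyset,S)$ is $k$-ary and hence cannot lie in $\mathcal{T}_m$, so the hypothesis already fails and the condition holds vacuously. Thus only the case where $(R,S)$ is $m$-ary need be treated. Here, by remark (1) each finite relaxation of $(R,S)$ is itself $m$-ary, so ``lies in $\mathcal{T}_m$'' coincides with ``lies in $\mathcal{T}$''; consequently every finite relaxation of $(R,S)$ belongs to $\mathcal{T}$, and local closure of $\mathcal{T}$ yields $(R,S)\in\mathcal{T}$, whence $(R,S)\in\mathcal{T}_m$.

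I do not expect any serious obstacle, since both parts are essentially bookkeeping with the identity $\mathcal{T}_m=\mathcal{Q}_m\cap\mathcal{T}$. The one point deserving care is the local-closure step, where one must observe that relaxations preserve arity -- so that restricting attention to $m$-ary constraints does not spuriously affect which constraints satisfy the local-closure hypothesis -- and dispose of the arity mismatch $k\neq m$ by the vacuous argument above.
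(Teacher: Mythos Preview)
Your proposal is correct and is precisely the routine verification from the definitions that the paper has in mind; the paper itself gives no explicit proof of this Fact, merely remarking that it is easily verified. Your careful handling of the arity-mismatch case in the local-closure argument (disposing of $k\neq m$ via the finite relaxation $(\emptyset,S)$) is exactly the right bookkeeping to make the identity $\mathcal{T}_m=\mathcal{Q}_m\cap\mathcal{T}$ do all the work.
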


We use Lemma 1 and Fact 8 to prove the following, which provides necessary and sufficient 
conditions for a set of constraints of a given arity to be characterized by external operations:

\begin{theorem}
Consider arbitrary non-empty sets $A$ and $B$ and let $m$ be a positive integer.
Let $\mathcal{Q}_m$ be the set of all $A$-to-$B$ $m$-ary relational constraints, and let
$\mathcal{T}_m\subseteq \mathcal{Q}_m$. 
Then the following are equivalent:
\begin{itemize}
\item[(i)]$\mathcal{T}_m$ is locally closed, contains the $m$-ary equality constraint and 
the $m$-ary empty constraint, and it is
closed under formation of $m$-ary conjunctive minors;
\item[(ii)]$\mathcal{T}_m$ is characterized within $\mathcal{Q}_m$ by some set 
of $B$-valued functions on $A$.
\end{itemize}
\end{theorem}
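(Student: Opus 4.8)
The plan is to reduce both implications to three ingredients already in hand: the unrestricted characterization of closed constraint sets (Theorem 2), the factorization ${\bf CSF}\circ {\bf FSC}={\bf LO}\circ {\bf CM}$ of Theorem 4(ii), and the two arity-truncation identities Fact 8 and Lemma 1. Conceptually, this statement is the arity-restricted analogue, for the dual objects, of Theorem 2: the three closure conditions in $(i)$ say precisely that $\mathcal{T}_m$ is simultaneously a fixed point of ${\bf CM}_m$ and of ${\bf LO}$, while $(ii)$ says that $\mathcal{T}_m$ is a value of the truncated Galois closure operator $\mathcal{K}\mapsto {\bf CSF}_m(\mathcal{K})=\mathcal{Q}_m\cap {\bf CSF}(\mathcal{K})$.

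For $(ii)\Rightarrow (i)$, I would start from $\mathcal{T}_m={\bf CSF}_m(\mathcal{K})=\mathcal{Q}_m\cap {\bf CSF}(\mathcal{K})$ and set $\mathcal{T}={\bf CSF}(\mathcal{K})$, so that $\mathcal{T}_m$ is exactly the set of $m$-ary constraints in $\mathcal{T}$. By Theorem 2, $\mathcal{T}$ is locally closed, closed under formation of conjunctive minors, and contains the binary equality and empty constraints. Fact 8 then transfers local closure and closure under $m$-ary conjunctive minors down to $\mathcal{T}_m$. It remains to check the two distinguished constraints: the $m$-ary empty constraint is the empty constraint $(\emptyset,\emptyset)$ viewed in arity $m$, hence lies in $\mathcal{Q}_m\cap \mathcal{T}=\mathcal{T}_m$; and, for $m\geq 2$, the $m$-ary equality constraint is, by the remark preceding Lemma 1, a tight (hence plain) conjunctive minor of $m-1$ copies of the binary equality constraint, so it lies in $\mathcal{T}$ and, being $m$-ary, in $\mathcal{T}_m$ (the case $m=1$ is immediate, since the $1$-ary equality constraint is the trivial constraint $(A,B)$, satisfied by every function).

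For $(i)\Rightarrow (ii)$, I would take the natural candidate $\mathcal{K}={\bf FSC}(\mathcal{T}_m)$ and compute ${\bf CSF}_m(\mathcal{K})$ directly. By definition ${\bf CSF}_m({\bf FSC}(\mathcal{T}_m))=\mathcal{Q}_m\cap {\bf CSF}({\bf FSC}(\mathcal{T}_m))$, which by Theorem 4(ii) equals $\mathcal{Q}_m\cap {\bf LO}({\bf CM}(\mathcal{T}_m))$, and by Lemma 1 this is ${\bf LO}({\bf CM}_m(\mathcal{T}_m))$. Now $(i)$ says exactly that $\mathcal{T}_m$ is a fixed point of both inner operators: closure under $m$-ary conjunctive minors together with membership of the $m$-ary equality and empty constraints gives ${\bf CM}_m(\mathcal{T}_m)=\mathcal{T}_m$, while local closure gives ${\bf LO}(\mathcal{T}_m)=\mathcal{T}_m$. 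Chaining these yields ${\bf CSF}_m(\mathcal{K})={\bf LO}({\bf CM}_m(\mathcal{T}_m))={\bf LO}(\mathcal{T}_m)=\mathcal{T}_m$, so $\mathcal{T}_m$ is characterized within $\mathcal{Q}_m$ by $\mathcal{K}$, which is $(ii)$.

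Since the whole argument is a bookkeeping composition of earlier results, I do not expect a genuine obstacle in either direction; the one step that truly carries the weight is ensuring that intersection with $\mathcal{Q}_m$ commutes correctly through the composite ${\bf LO}\circ {\bf CM}$, which is precisely Lemma 1 and is the place where the Transitivity Lemma is used, via the identity ${\bf CM}_m(\mathcal{T}_m)=\mathcal{Q}_m\cap {\bf CM}(\mathcal{T}_m)$. A secondary point needing care is the treatment of the distinguished equality constraint across the change of arity in $(ii)\Rightarrow (i)$, where one must invoke the conjunctive-minor relationship between the $m$-ary and binary equality constraints rather than arguing it directly.
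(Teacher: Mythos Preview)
Your proof is correct and takes essentially the same approach as the paper. The only difference is organizational: for $(i)\Rightarrow(ii)$ you invoke the packaged factorization of Theorem~4(ii) with the explicit witness $\mathcal{K}={\bf FSC}(\mathcal{T}_m)$, whereas the paper unrolls that step via Theorem~3(b) and Theorem~2 to obtain \emph{some} characterizing $\mathcal{K}$, and in $(ii)\Rightarrow(i)$ you spell out the membership of the $m$-ary equality and empty constraints where the paper absorbs this into its appeal to Fact~8.
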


\begin{proof}
To see that implication $(ii)\Rightarrow (i)$ holds, 
let $\mathcal{K}\subseteq \cup _{n\geq 1}B^{A^n}$ 
be a set of functions such that $\mathcal{T}_m={\bf CSF}_m(\mathcal{K})$. 
By Theorem 2, we have that
${\bf CSF}(\mathcal{K})$ is locally closed, contains the binary equality constraint and 
the empty constraint, and it is closed under formation of conjunctive minors.
 Hence, by Fact 8 $\mathcal{T}_m$ is locally closed, contains the $m$-ary equality constraint and 
$m$-ary empty constraint, and it is
closed under formation of $m$-ary conjunctive minors.

\bigskip

To prove $(i)\Rightarrow (ii)$, assume $(i)$.
Let $\mathcal{T}={\bf CM}(\mathcal{T}_m)$. By $(b)$ in Theorem 3, we have that ${\bf LO}(\mathcal{T})$ 
contains the binary equality constraint,
 the empty constraint, and it is closed under formation of conjunctive minors. Since
${\bf LO}(\mathcal{T})$ is
locally closed it follows from Theorem 2 that
 ${\bf LO}(\mathcal{T})$ is characterized by some set of 
$B$-valued functions of several variables on $A$, i.e.
${\bf LO}(\mathcal{T})= {\bf CSF}(\mathcal{K})$ for some set $\mathcal{K}$ 
of $B$-valued functions on $A$.
By Lemma 1, we have $\mathcal{T}_m=\mathcal{Q}_m\cap {\bf LO}(\mathcal{T})$.
 Thus $\mathcal{T}_m={\bf CSF}_m(\mathcal{K})$, i.e. $\mathcal{T}_m$ is characterized within $\mathcal{Q}_m$
 by some set of $B$-valued functions on $A$.
\end{proof}

Similarly to the Galois correspondences ${\bf FSC}_n-$${\bf CSF}$, the closure operators ${\bf FSC}\circ {\bf CSF}_m$ and 
${\bf CSF}_m\circ {\bf FSC}$ can be represented as compositions of ${\bf Lo}_m$ and ${\bf VS}$, and 
${\bf LO}$ and ${\bf CM}_m$, respectively. To establish such factorizations, we need the following:

\begin{theorem}
Consider arbitrary non-empty sets $A$ and $B$, and let $\mathcal{K}\subseteq \cup _{n\geq 1}B^{A^n}$ 
be a class of $B$-valued functions on $A$,
 and $\mathcal{T}_m$ be a set of $m$-ary $A$-to-$B$ relational constraints.
 The following hold:
\begin{itemize}
\item[(a)] If ${\bf VS}(\mathcal{K})=\mathcal{K}$,
 then ${\bf VS}({\bf Lo}_m(\mathcal{K}))={\bf Lo}_m(\mathcal{K})$; 
\item[(b)] If ${\bf CM}_m(\mathcal{T}_m)=\mathcal{T}_m$, then
 ${\bf CM}_m({\bf LO}(\mathcal{T}_m))={\bf LO}(\mathcal{T}_m)$.
\end{itemize}
\end{theorem}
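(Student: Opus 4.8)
The plan is to treat (a) as a direct adaptation of the proof of Theorem 3(a), and to reduce (b) to Theorem 3(b) by pushing everything through the unrestricted operator ${\bf CM}$ via Lemma 1.

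For (a), I would run the argument for Theorem 3(a) essentially verbatim, merely replacing ``finite subset $F$ of $A^t$'' by ``subset $F$ of $A^t$ of size at most $m$''. Concretely, take a $t$-ary $g\in {\bf VS}({\bf Lo}_m(\mathcal{K}))$, so $g=f(p_1,\ldots ,p_n)$ for some $n$-ary $f\in {\bf Lo}_m(\mathcal{K})$ and $t$-ary projections $p_1,\ldots ,p_n\in \mathcal{O}_A$. To place $g$ in ${\bf Lo}_m(\mathcal{K})$ it suffices to produce, for each $F\subseteq A^t$ with $\mid F\mid \leq m$, a $t$-ary $g_F\in \mathcal{K}$ agreeing with $g$ on $F$. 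Setting $F'=\{(p_1({\bf a}),\ldots ,p_n({\bf a}))\mid {\bf a}\in F\}$, the single point where the $m$-local version needs care is the size of $F'$, and here the harmless observation $\mid F'\mid \leq \mid F\mid \leq m$ saves us: since $f\in {\bf Lo}_m(\mathcal{K})$ there is $f_{F'}\in \mathcal{K}$ agreeing with $f$ on $F'$, and then $g_F=f_{F'}(p_1,\ldots ,p_n)$ lies in $\mathcal{K}$ because ${\bf VS}(\mathcal{K})=\mathcal{K}$, and agrees with $g$ on $F$. The reverse inclusion ${\bf Lo}_m(\mathcal{K})\subseteq {\bf VS}({\bf Lo}_m(\mathcal{K}))$ is just extensivity of ${\bf VS}$.

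For (b), rather than redo the conjunctive-minor bookkeeping, I would reduce to the unrestricted case. First observe that ${\bf LO}(\mathcal{T}_m)$ consists only of $m$-ary constraints: every constraint admits a finite relaxation (e.g. one with empty antecedent), relaxations preserve arity, and so a constraint all of whose finite relaxations lie in $\mathcal{T}_m\subseteq \mathcal{Q}_m$ must itself be $m$-ary. Hence the identity ${\bf CM}_m(\mathcal{U})=\mathcal{Q}_m\cap {\bf CM}(\mathcal{U})$, valid for any set $\mathcal{U}$ of $m$-ary constraints, applies with $\mathcal{U}={\bf LO}(\mathcal{T}_m)$. Next, from $\mathcal{T}_m\subseteq {\bf CM}(\mathcal{T}_m)$ and monotonicity of ${\bf LO}$ and ${\bf CM}$, together with Theorem 3(b) applied to the ${\bf CM}$-closed set ${\bf CM}(\mathcal{T}_m)$ (closed by idempotency), I get ${\bf CM}({\bf LO}(\mathcal{T}_m))\subseteq {\bf CM}({\bf LO}({\bf CM}(\mathcal{T}_m)))={\bf LO}({\bf CM}(\mathcal{T}_m))$. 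Intersecting with $\mathcal{Q}_m$, invoking Lemma 1, and using the hypothesis ${\bf CM}_m(\mathcal{T}_m)=\mathcal{T}_m$ then closes the loop:
\begin{displaymath}
{\bf CM}_m({\bf LO}(\mathcal{T}_m))=\mathcal{Q}_m\cap {\bf CM}({\bf LO}(\mathcal{T}_m))\subseteq \mathcal{Q}_m\cap {\bf LO}({\bf CM}(\mathcal{T}_m))={\bf LO}({\bf CM}_m(\mathcal{T}_m))={\bf LO}(\mathcal{T}_m).
\end{displaymath}
The opposite inclusion is extensivity of ${\bf CM}_m$, giving equality.

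The only real obstacle is in (b): managing the mismatch between the arity-restricted operator ${\bf CM}_m$ and the unrestricted ${\bf LO}$. The device that removes it is to convert ${\bf CM}_m$ into $\mathcal{Q}_m\cap {\bf CM}(\cdot )$, run the already-established unrestricted result Theorem 3(b), and then return to the restricted world through Lemma 1. Part (a), by contrast, presents no genuine difficulty once one notices that applying projections cannot enlarge a set, so the size bound $\mid F'\mid \leq m$ is automatic and the ${\bf Lo}$-argument transfers intact to ${\bf Lo}_m$.
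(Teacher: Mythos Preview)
Your proposal is correct and follows essentially the same route as the paper: part (a) is precisely the adaptation the paper indicates (replace ``finite subset $F$'' by ``finite subset $F$ of size at most $m$'' and note $\mid F'\mid \leq \mid F\mid \leq m$), and part (b) is the same reduction to Theorem~3(b) via Lemma~1, with the inclusion chain and use of ${\bf CM}_m(\mathcal{T}_m)=\mathcal{T}_m$ arranged in the same way. Your explicit verification that ${\bf LO}(\mathcal{T}_m)\subseteq \mathcal{Q}_m$ before invoking ${\bf CM}_m(\mathcal{U})=\mathcal{Q}_m\cap {\bf CM}(\mathcal{U})$ is a welcome point of care that the paper leaves implicit.
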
 

\begin{proof}
The proof $(a)$ can be obtained by replacing, in the proof of $(a)$ of Theorem 3,
 ${\bf Lo}$ by ${\bf Lo}_m$, and 
 ``finite subset $F$" by ``finite subset $F$ of size at most $m$". 
 
To prove $(b)$, we make use of $(b)$ in Theorem 3.
Let $\mathcal{Q}_m$ be the set of all $A$-to-$B$ $m$-ary relational constraints.
By Lemma 1 we have that 
$\mathcal{Q}_m\cap {\bf LO}({\bf CM}(\mathcal{T}_m))={\bf LO}({\bf CM}_m(\mathcal{T}_m))$, and
 by $(b)$ in Theorem 3, it follows that 
\begin{displaymath}
\mathcal{Q}_m\cap {\bf CM}({\bf LO}({\bf CM}(\mathcal{T}_m)))=\mathcal{Q}_m\cap {\bf LO}({\bf CM}(\mathcal{T}_m))={\bf LO}({\bf CM}_m(\mathcal{T}_m)).  
\end{displaymath}
Since ${\bf CM}_m({\bf LO}({\bf CM}(\mathcal{T}_m)))=\mathcal{Q}_m\cap {\bf CM}({\bf LO}({\bf CM}(\mathcal{T}_m)))$,
we get 
\begin{displaymath}
{\bf CM}_m({\bf LO}({\bf CM}(\mathcal{T}_m)))={\bf LO}({\bf CM}_m(\mathcal{T}_m)).  
\end{displaymath}
Observe that ${\bf CM}_m({\bf LO}(\mathcal{T}_m))\subseteq {\bf CM}_m({\bf LO}({\bf CM}(\mathcal{T}_m)))$,
 and 
${\bf LO}({\bf CM}_m(\mathcal{T}_m))={\bf LO}(\mathcal{T}_m)$ 
because $\mathcal{T}_m={\bf CM}_m(\mathcal{T}_m)$. 
Thus
\begin{displaymath}
{\bf CM}_m({\bf LO}(\mathcal{T}_m))\subseteq {\bf CM}_m({\bf LO}({\bf CM}(\mathcal{T}_m)))={\bf LO}(\mathcal{T}_m).  
\end{displaymath}
Since ${\bf LO}(\mathcal{T}_m)\subseteq {\bf CM}_m({\bf LO}(\mathcal{T}_m))$, we conclude that  
${\bf CM}_m({\bf LO}(\mathcal{T}_m))={\bf LO}(\mathcal{T}_m)$. 
\end{proof} 

Property $(a)$ in the above Theorem, is analogous to $(ii)$ of Lemma 2.5 in [P\" o2].  
From Theorem 9, Theorem 10 and Theorem 11, we obtain the analogue of Theorem 4.

\begin{theorem}
Consider arbitrary non-empty sets $A$ and $B$, and let $m$ be a positive integer.
 For any class of functions $\mathcal{K}\subseteq \cup _{n\geq 1}B^{A^n}$ and 
any set $\mathcal{T}_m$ of $m$-ary $A$-to-$B$ relational constraints, the following hold:
\begin{itemize}
\item[(i)] ${\bf FSC}({\bf CSF}_m(\mathcal{K}))= {\bf Lo}_m({\bf VS}(\mathcal{K}))$, and
\item[(ii)] ${\bf CSF}_m({\bf FSC}(\mathcal{T}_m))={\bf LO}({\bf CM}_m(\mathcal{T}_m))$.
\end{itemize}
\end{theorem}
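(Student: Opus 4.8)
The plan is to read both equalities as statements about the Galois closure operators of the connection ${\bf FSC}-{\bf CSF}_m$ between function classes and sets of $m$-ary constraints, and to verify that the composite operators on the right-hand sides already land inside the respective Galois closed systems. First I would record that ${\bf FSC}\circ {\bf CSF}_m$ and ${\bf CSF}_m\circ {\bf FSC}$ are closure operators: this follows from ${\bf CSF}_m(\mathcal{K})=\mathcal{Q}_m\cap {\bf CSF}(\mathcal{K})$ together with the order-reversing and extensive properties inherited from the unrestricted connection ${\bf FSC}-{\bf CSF}$. Consequently ${\bf FSC}({\bf CSF}_m(\mathcal{K}))$ is the smallest Galois closed class containing $\mathcal{K}$, and ${\bf CSF}_m({\bf FSC}(\mathcal{T}_m))$ is the smallest Galois closed set of $m$-ary constraints containing $\mathcal{T}_m$; the two closed systems are described by Theorem 9 and Theorem 10 respectively.

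For $(i)$ I would show that ${\bf Lo}_m({\bf VS}(\mathcal{K}))$ meets condition $(i)$ of Theorem 9. It is $m$-locally closed by construction, and it is closed under simple variable substitutions because, applying Theorem 11$(a)$ to the class ${\bf VS}(\mathcal{K})$, which satisfies ${\bf VS}({\bf VS}(\mathcal{K}))={\bf VS}(\mathcal{K})$ by Fact 1, one obtains ${\bf VS}({\bf Lo}_m({\bf VS}(\mathcal{K})))={\bf Lo}_m({\bf VS}(\mathcal{K}))$. Hence ${\bf Lo}_m({\bf VS}(\mathcal{K}))$ is Galois closed, and since both ${\bf VS}$ and ${\bf Lo}_m$ are extensive it contains $\mathcal{K}$; minimality of the closure yields ${\bf FSC}({\bf CSF}_m(\mathcal{K}))\subseteq {\bf Lo}_m({\bf VS}(\mathcal{K}))$. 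For the reverse inclusion I would use that ${\bf FSC}({\bf CSF}_m(\mathcal{K}))$ is itself Galois closed, hence by Theorem 9 it is $m$-locally closed and closed under simple variable substitutions, so it is fixed by both ${\bf VS}$ and ${\bf Lo}_m$; as it contains $\mathcal{K}$, monotonicity of ${\bf VS}$ and then of ${\bf Lo}_m$ gives ${\bf Lo}_m({\bf VS}(\mathcal{K}))\subseteq {\bf FSC}({\bf CSF}_m(\mathcal{K}))$.

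Part $(ii)$ is entirely dual. Here I would check that ${\bf LO}({\bf CM}_m(\mathcal{T}_m))$ satisfies condition $(i)$ of Theorem 10: it is locally closed by construction and contains the $m$-ary equality and empty constraints, since these already lie in ${\bf CM}_m(\mathcal{T}_m)$, while closure under formation of $m$-ary conjunctive minors follows from Theorem 11$(b)$ applied to ${\bf CM}_m(\mathcal{T}_m)$, which is fixed by ${\bf CM}_m$ by idempotency. Thus ${\bf LO}({\bf CM}_m(\mathcal{T}_m))$ is Galois closed and contains $\mathcal{T}_m$, giving one inclusion by minimality; the other follows because ${\bf CSF}_m({\bf FSC}(\mathcal{T}_m))$ is Galois closed, hence fixed by ${\bf CM}_m$ and ${\bf LO}$, and it contains $\mathcal{T}_m$.

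The substantive content of the argument is carried entirely by the commutation identities of Theorem 11: they are exactly what guarantees that a single pass of ${\bf Lo}_m$ after ${\bf VS}$ (respectively ${\bf LO}$ after ${\bf CM}_m$) already produces a set closed under the other operator, so that no alternation or transfinite iteration is needed. I expect the only point demanding care to be the bookkeeping that ${\bf FSC}\circ {\bf CSF}_m$ and ${\bf CSF}_m\circ {\bf FSC}$ are genuine closure operators with closed systems described by Theorems 9 and 10; once this is in place, both inclusions in each part are immediate from minimality and from the idempotence of the constituent operators.
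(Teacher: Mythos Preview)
Your proposal is correct and follows precisely the route the paper intends: the paper states only that Theorem 12 is obtained ``from Theorem 9, Theorem 10 and Theorem 11'' as the analogue of Theorem 4, and your argument spells out exactly this derivation---using Theorem 11 to show that ${\bf Lo}_m({\bf VS}(\mathcal{K}))$ and ${\bf LO}({\bf CM}_m(\mathcal{T}_m))$ land in the closed systems described by Theorems 9 and 10, and then invoking minimality of the Galois closure for one inclusion and monotonicity for the other.
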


\subsection{Simultaneous restrictions to the arities of functions and constraints}

Let $ \mathcal{K}$ 
be a class of $B$-valued functions on $A$,
 and $\mathcal{T}$ be a set of $A$-to-$B$ relational constraints. 
It is not difficult to see that for any positive integers $n$ and $m$,
 $B^{A^n}\cap {\bf Lo}_m(\mathcal{K})={\bf Lo}_m(B^{A^n}\cap \mathcal{K})$,
and that $\mathcal{Q}_m\cap {\bf LO}_n (\mathcal{T})= {\bf LO}_n(\mathcal{Q}_m\cap \mathcal{T})$, 
where $\mathcal{Q}_m$ denotes the set of all $A$-to-$B$ $m$-ary constraints.
Using these facts, Theorems 5 and 9, and Theorems 6 and 10 can be combined as follows:

\begin{theorem}
Consider arbitrary non-empty sets $A$ and $B$, and let $n$ and $m$ be positive integers.
 For a class of $n$-ary functions $\mathcal{K}_n\subseteq B^{A^n}$ 
the following conditions are equivalent:
\begin{itemize}
\item[(i)] $\mathcal{K}_n$ is $m$-locally closed and it is closed under $n$-ary simple variable substitutions;
\item[(ii)] $\mathcal{K}_n$ is definable within $B^{A^n}$ by some set of $A$-to-$B$ $m$-ary constraints.
\end{itemize}
\end{theorem}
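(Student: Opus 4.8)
The plan is to reduce this doubly-restricted equivalence to Theorem 9, which already disposes of the constraint-arity restriction for functions of unrestricted arity, and then to cut down to arity $n$. The two commutation identities recorded just before the theorem---namely $B^{A^n}\cap{\bf Lo}_m(\mathcal{K})={\bf Lo}_m(B^{A^n}\cap\mathcal{K})$, together with the defining equation ${\bf VS}_n(\mathcal{K}_n)=B^{A^n}\cap{\bf VS}(\mathcal{K}_n)$---are exactly what make the restriction in the function arity $n$ and the restriction in the constraint arity $m$ essentially independent, so I would organise both implications around them.

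For $(ii)\Rightarrow(i)$, suppose $\mathcal{K}_n={\bf FSC}_n(\mathcal{T})$ for a set $\mathcal{T}$ of $m$-ary constraints, and put $\mathcal{K}={\bf FSC}(\mathcal{T})$, so that $\mathcal{K}_n=B^{A^n}\cap\mathcal{K}$. By Theorem 9 the class $\mathcal{K}$ is $m$-locally closed and closed under simple variable substitutions. For the $m$-local closure of $\mathcal{K}_n$ I would apply the first commutation identity: ${\bf Lo}_m(\mathcal{K}_n)={\bf Lo}_m(B^{A^n}\cap\mathcal{K})=B^{A^n}\cap{\bf Lo}_m(\mathcal{K})=B^{A^n}\cap\mathcal{K}=\mathcal{K}_n$. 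Closure under $n$-ary simple variable substitutions then follows exactly as in the comment preceding Fact 5, since $\mathcal{K}_n\subseteq\mathcal{K}$ gives ${\bf VS}(\mathcal{K}_n)\subseteq{\bf VS}(\mathcal{K})=\mathcal{K}$ and hence ${\bf VS}_n(\mathcal{K}_n)=B^{A^n}\cap{\bf VS}(\mathcal{K}_n)\subseteq\mathcal{K}_n$.

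For $(i)\Rightarrow(ii)$, I would set $\mathcal{K}={\bf VS}(\mathcal{K}_n)$; by idempotency of ${\bf VS}$ (Fact 1) this satisfies ${\bf VS}(\mathcal{K})=\mathcal{K}$, so Theorem 11$(a)$ shows ${\bf Lo}_m(\mathcal{K})$ is closed under simple variable substitutions, while Fact 7$(a)$ shows it is $m$-locally closed. Theorem 9 then furnishes a set $\mathcal{T}$ of $m$-ary constraints with ${\bf Lo}_m(\mathcal{K})={\bf FSC}(\mathcal{T})$. It remains to identify the $n$-ary part: using the commutation identity and the two hypotheses on $\mathcal{K}_n$, one computes $B^{A^n}\cap{\bf Lo}_m(\mathcal{K})={\bf Lo}_m(B^{A^n}\cap{\bf VS}(\mathcal{K}_n))={\bf Lo}_m({\bf VS}_n(\mathcal{K}_n))={\bf Lo}_m(\mathcal{K}_n)=\mathcal{K}_n$, whence $\mathcal{K}_n=B^{A^n}\cap{\bf FSC}(\mathcal{T})={\bf FSC}_n(\mathcal{T})$, as required.

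I do not expect any genuinely hard step; the content lies entirely in arranging the reductions correctly. The one point deserving care, and the place where the argument could silently go wrong, is the interchange of the cut $B^{A^n}\cap(-)$ with the operators ${\bf Lo}_m$ and ${\bf VS}$: one must use that ${\bf Lo}_m$ adds only $n$-ary functions whose short restrictions are already witnessed by $n$-ary members present, so it is unaffected by any higher-arity functions sitting in $\mathcal{K}$, and dually that $B^{A^n}\cap{\bf VS}(\mathcal{K}_n)$ collapses back to $\mathcal{K}_n$ precisely because $\mathcal{K}_n$ is assumed closed under $n$-ary simple variable substitutions. Once these commutations are secured, the two single-restriction theorems combine mechanically.
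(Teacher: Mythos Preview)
Your proposal is correct and follows essentially the same route as the paper's proof: both directions are reduced to Theorem~9 via the commutation identity $B^{A^n}\cap{\bf Lo}_m(\mathcal{K})={\bf Lo}_m(B^{A^n}\cap\mathcal{K})$ together with the definition ${\bf VS}_n(\mathcal{K}_n)=B^{A^n}\cap{\bf VS}(\mathcal{K}_n)$. Your version is in fact slightly more explicit than the paper's, which in the $(i)\Rightarrow(ii)$ direction sets $\mathcal{K}={\bf Lo}_m({\bf VS}(\mathcal{K}_n))$ directly and cites ``Lemma~1'' (evidently a misprint for the commutation identity) for $\mathcal{K}_n=B^{A^n}\cap\mathcal{K}$, whereas you spell out the appeal to Theorem~11$(a)$ and Fact~7$(a)$ that justify applying Theorem~9 to ${\bf Lo}_m({\bf VS}(\mathcal{K}_n))$.
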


\begin{proof} $(ii)\Rightarrow (i):$ Suppose that $(ii)$ holds, i.e. 
$\mathcal{K}_n = {\bf FSC}_n(\mathcal{T}_m)$ for some set $\mathcal{T}_m$ of $m$-ary constraints.
Let $ \mathcal{K}= {\bf FSC}(\mathcal{T}_m)$. By Theorem 9, 
$\mathcal{K}$ is $m$-locally closed and it is closed under simple variable substitutions.
Since $\mathcal{K}_n= B^{A^n}\cap \mathcal{K}$, $\mathcal{K}_n$ 
is closed under $n$-ary simple variable substitutions, and using the fact that 
$B^{A^n}\cap {\bf Lo}_m(\mathcal{K})={\bf Lo}_m(B^{A^n}\cap \mathcal{K})$, we conclude that
$\mathcal{K}_n$ is $m$-locally closed. Thus $(i)$ holds.
 
\smallskip

$(i)\Rightarrow (ii):$ Suppose that $(ii)$ holds, and let
 $ \mathcal{K}={\bf Lo}_m({\bf VS}(\mathcal{K}_n))$. By Lemma 1, we have that 
$\mathcal{K}_n= B^{A^n}\cap \mathcal{K}$, and it follows from Theorem 9 that 
$\mathcal{K}$ is definable by some set of $A$-to-$B$ $m$-ary constraints, i.e.
$ \mathcal{K}= {\bf FSC}(\mathcal{T}_m)$ for some set $\mathcal{T}_m$ of $m$-ary constraints.
Hence, $\mathcal{K}_n= B^{A^n}\cap {\bf FSC}(\mathcal{T}_m)={\bf FSC}_n(\mathcal{T}_m)$, i.e.
$(ii)$ holds.
\end{proof}

\begin{theorem}
Consider arbitrary non-empty sets $A$ and $B$ and let $n$ and $m$ be positive integers.
Let $\mathcal{Q}_m$ be the set of all $A$-to-$B$ $m$-ary relational constraints, and let
$\mathcal{T}_m\subseteq \mathcal{Q}_m$. 
Then the following are equivalent:
\begin{itemize}
\item[(i)]$\mathcal{T}_m$ is $n$-locally closed, contains the $m$-ary equality constraint and 
$m$-ary empty constraint, and it is
closed under formation of $m$-ary conjunctive minors;
\item[(ii)]$\mathcal{T}_m$ is characterized within $\mathcal{Q}_m$ by some set 
of $n$-ary $B$-valued functions on $A$.
\end{itemize}
\end{theorem}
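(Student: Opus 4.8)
The plan is to prove Theorem 14 by the same combination strategy used in Theorem 13, namely by intersecting the characterization of Theorem 6 (which restricts function arities to $n$) with that of Theorem 10 (which restricts constraint arities to $m$), using the commutation fact $\mathcal{Q}_m\cap {\bf LO}_n(\mathcal{T})={\bf LO}_n(\mathcal{Q}_m\cap \mathcal{T})$ stated just before Theorem 13. The two implications are handled separately.

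For the implication $(ii)\Rightarrow (i)$, I would assume $\mathcal{T}_m={\bf CSF}_m^{\,}(\mathcal{K})$ for some set $\mathcal{K}$ of $n$-ary $B$-valued functions on $A$. Setting $\mathcal{T}={\bf CSF}(\mathcal{K})$, Theorem 6 applied to the $n$-ary class $\mathcal{K}$ gives that $\mathcal{T}$ is $n$-locally closed, contains the binary equality and empty constraints, and is closed under formation of conjunctive minors. Since $\mathcal{T}_m=\mathcal{Q}_m\cap \mathcal{T}$, I would then invoke Fact 8 to transfer the conjunctive-minor closure and local closure down to the $m$-ary fragment, noting that the binary equality constraint yields the $m$-ary equality constraint as a tight conjunctive minor (as remarked before Lemma 1), so $\mathcal{T}_m$ contains the $m$-ary equality and empty constraints and is closed under $m$-ary conjunctive minors. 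The remaining point is that $n$-local closure survives the intersection with $\mathcal{Q}_m$; this follows from the commutation identity $\mathcal{Q}_m\cap {\bf LO}_n(\mathcal{T})={\bf LO}_n(\mathcal{Q}_m\cap \mathcal{T})$, giving $(i)$.

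For the converse $(i)\Rightarrow (ii)$, I would mirror the proof of Theorem 10$(i)\Rightarrow(ii)$ but with ${\bf LO}$ replaced by ${\bf LO}_n$. Starting from $\mathcal{T}_m$ satisfying $(i)$, set $\mathcal{T}={\bf CM}(\mathcal{T}_m)$. By Theorem 7$(b)$, ${\bf LO}_n({\bf CM}(\mathcal{T}_m))$ is still closed under conjunctive minors (and contains the equality and empty constraints), and it is $n$-locally closed, so by Theorem 6 it equals ${\bf CSF}(\mathcal{K})$ for some set $\mathcal{K}$ of $n$-ary functions. It then remains to identify the $m$-ary fragment: using the analogue of Lemma 1 with ${\bf LO}_n$ in place of ${\bf LO}$, namely $\mathcal{Q}_m\cap {\bf LO}_n({\bf CM}(\mathcal{T}_m))={\bf LO}_n({\bf CM}_m(\mathcal{T}_m))=\mathcal{T}_m$ (the last equality because $\mathcal{T}_m$ is $n$-locally closed and equals ${\bf CM}_m(\mathcal{T}_m)$), I would conclude $\mathcal{T}_m=\mathcal{Q}_m\cap {\bf CSF}(\mathcal{K})={\bf CSF}_m(\mathcal{K})$, establishing $(ii)$.

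The main obstacle I anticipate is bookkeeping the parametrized local-closure analogue of Lemma 1: I must verify that ${\bf LO}_n$ commutes with intersection by $\mathcal{Q}_m$ in the same way ${\bf LO}$ does, and that ${\bf LO}_n({\bf CM}_m(\mathcal{T}_m))=\mathcal{T}_m$ under the stated hypotheses. Both reduce to the identity $\mathcal{Q}_m\cap {\bf LO}_n(\mathcal{T})={\bf LO}_n(\mathcal{Q}_m\cap \mathcal{T})$ together with ${\bf CM}_m(\mathcal{T}_m)=\mathcal{Q}_m\cap {\bf CM}(\mathcal{T}_m)$, so the combinatorial content is already available; the work is merely confirming that relaxations with antecedent of size at most $n$ behave compatibly with the $m$-ary arity restriction, exactly as the key observation $|F_j|\le|F|\le n$ licenses in the proof of Theorem 7$(b)$.
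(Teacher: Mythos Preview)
Your proposal is correct and follows precisely the approach the paper intends: the paper's own proof of Theorem~14 consists of a single sentence saying it proceeds ``in complete analogy with the proof of Theorem~13, using Theorem~6 and the remarks preceding Theorem~13,'' and your outline is exactly that analogy spelled out in detail (Theorem~6 in place of Theorem~9, the commutation $\mathcal{Q}_m\cap {\bf LO}_n(\mathcal{T})={\bf LO}_n(\mathcal{Q}_m\cap\mathcal{T})$ in place of its function-class counterpart, and Theorem~7(b) in place of Theorem~11(a)). The bookkeeping you flag as the main obstacle is already granted by the remarks before Theorem~13 together with ${\bf CM}_m(\mathcal{T}_m)=\mathcal{Q}_m\cap{\bf CM}(\mathcal{T}_m)$, so no additional work is needed.
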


\begin{proof} The proof of Theorem 14 follows in complete analogy with the proof of 
Theorem 13, using Theorem 6 and the remarks preceding Theorem 13.
\end{proof}

Furthermore, combining Theorem 8 and Theorem 12 we get:

\begin{theorem}
Consider arbitrary non-empty sets $A$ and $B$, and let $n$ and $m$ be positive integers.
 For any class of $n$-ary $B$-valued functions on $A$ and 
any set $\mathcal{T}_m$ of $m$-ary $A$-to-$B$ relational constraints, the following hold:
\begin{itemize}
\item[(i)] ${\bf FSC}_n({\bf CSF}_m(\mathcal{K}_n))= {\bf Lo}_m({\bf VS}_n(\mathcal{K}_n))$, and
\item[(ii)] ${\bf CSF}_m({\bf FSC}_n(\mathcal{T}_m))={\bf LO}_n({\bf CM}_m(\mathcal{T}_m))$.
\end{itemize}
\end{theorem}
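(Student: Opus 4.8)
The plan is to prove Theorem 15 in exactly the same fashion that Theorem 8 and Theorem 12 were obtained from their respective ingredient theorems, namely by localizing the unrestricted factorizations of Theorem 4 to the appropriate arities. For part $(i)$, I would start from the identity ${\bf FSC}({\bf CSF}(\mathcal{K}_n))={\bf Lo}({\bf VS}(\mathcal{K}_n))$ of Theorem 4, or better, from the already arity-restricted results. The key is to intersect with the ambient set $B^{A^n}$ (for functions) or $\mathcal{Q}_m$ (for constraints) and then push the intersection through the composite closure operators using the commutation facts recalled in the excerpt: $B^{A^n}\cap {\bf Lo}_m(\mathcal{K})={\bf Lo}_m(B^{A^n}\cap \mathcal{K})$ for functions, and $\mathcal{Q}_m\cap {\bf LO}_n(\mathcal{T})={\bf LO}_n(\mathcal{Q}_m\cap \mathcal{T})$ for constraints, both stated just before Theorem 13.

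Concretely, for $(i)$ I would argue
\begin{displaymath}
{\bf FSC}_n({\bf CSF}_m(\mathcal{K}_n))=B^{A^n}\cap {\bf FSC}({\bf CSF}_m(\mathcal{K}_n))=B^{A^n}\cap {\bf Lo}_m({\bf VS}(\mathcal{K}_n)),
\end{displaymath}
the first equality being the definition ${\bf FSC}_n=B^{A^n}\cap {\bf FSC}$, and the second being part $(i)$ of Theorem 12 applied to the class $\mathcal{K}_n$. Then I would apply the commutation identity to move $B^{A^n}$ inside ${\bf Lo}_m$, obtaining ${\bf Lo}_m(B^{A^n}\cap {\bf VS}(\mathcal{K}_n))={\bf Lo}_m({\bf VS}_n(\mathcal{K}_n))$, where the last step uses the very definition ${\bf VS}_n(\mathcal{K}_n)=B^{A^n}\cap {\bf VS}(\mathcal{K}_n)$. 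This chains together to give exactly ${\bf Lo}_m({\bf VS}_n(\mathcal{K}_n))$ as required.

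For $(ii)$ the strategy is symmetric. Starting from ${\bf CSF}_m({\bf FSC}_n(\mathcal{T}_m))=\mathcal{Q}_m\cap {\bf CSF}({\bf FSC}_n(\mathcal{T}_m))$ by the definition ${\bf CSF}_m=\mathcal{Q}_m\cap {\bf CSF}$, I would apply part $(ii)$ of Theorem 8 to the set $\mathcal{T}_m$ to rewrite the inner composite as ${\bf LO}_n({\bf CM}(\mathcal{T}_m))$, then use the commutation identity $\mathcal{Q}_m\cap {\bf LO}_n(\cdot)={\bf LO}_n(\mathcal{Q}_m\cap \cdot)$ to pull $\mathcal{Q}_m$ inside, and finally invoke the relation ${\bf CM}_m(\mathcal{T}_m)=\mathcal{Q}_m\cap {\bf CM}(\mathcal{T}_m)$ established via the Transitivity Lemma just before Lemma 1. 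This yields $\mathcal{Q}_m\cap {\bf LO}_n({\bf CM}(\mathcal{T}_m))={\bf LO}_n(\mathcal{Q}_m\cap {\bf CM}(\mathcal{T}_m))={\bf LO}_n({\bf CM}_m(\mathcal{T}_m))$, which is the desired right-hand side.

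The main obstacle I anticipate is purely bookkeeping rather than conceptual: one must verify that the two commutation identities genuinely hold in the form stated, and that the definition ${\bf CM}_m(\mathcal{T}_m)=\mathcal{Q}_m\cap {\bf CM}(\mathcal{T}_m)$ can be substituted inside a local-closure operator without the arity restriction interfering. The subtle point is that ${\bf LO}_n$ involves relaxations with antecedent bounded by $n$, while the arity bound $m$ on the constraints is fixed throughout; since $m$-ary constraints form a lattice closed under the relevant relaxations staying within arity $m$, the intersection with $\mathcal{Q}_m$ commutes cleanly with ${\bf LO}_n$. Once these identities are in hand, both parts reduce to a short chain of substitutions, so I expect the proof to be a one-paragraph verification citing Theorem 8, Theorem 12, and the two commutation remarks, in complete analogy with the proofs of Theorems 13 and 14.
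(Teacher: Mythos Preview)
Your proposal is correct and follows exactly the paper's own proof: the paper derives (i) via the chain ${\bf FSC}_n({\bf CSF}_m(\mathcal{K}_n))= B^{A^n}\cap {\bf FSC}({\bf CSF}_m(\mathcal{K}_n))= B^{A^n}\cap {\bf Lo}_m({\bf VS}(\mathcal{K}_n)) = {\bf Lo}_m(B^{A^n}\cap {\bf VS}(\mathcal{K}_n)) = {\bf Lo}_m({\bf VS}_n(\mathcal{K}_n))$ using Theorem 12(i) and the commutation remark, and (ii) by the symmetric chain using Theorem 8(ii), the commutation of $\mathcal{Q}_m$ with ${\bf LO}_n$, and the identity ${\bf CM}_m(\mathcal{T}_m)=\mathcal{Q}_m\cap {\bf CM}(\mathcal{T}_m)$. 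Your anticipated ``bookkeeping obstacles'' are precisely the observations the paper records before Theorem 13 and Lemma 1, so nothing further is needed.
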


\begin{proof}
From the above observations, and Theorem 12 (i) and Theorem 8 (ii), 
we get respectively, for
\begin{itemize} 
\item[(i)] ${\bf FSC}_n({\bf CSF}_m(\mathcal{K}_n))= B^{A^n}\cap {\bf FSC}({\bf CSF}_m(\mathcal{K}_n))= 
B^{A^n}\cap {\bf Lo}_m({\bf VS}(\mathcal{K}_n)) = {\bf Lo}_m(B^{A^n}\cap {\bf VS}(\mathcal{K}_n)) =
{\bf Lo}_m({\bf VS}_n(\mathcal{K}_n))$, and for 
\item[(ii)] ${\bf CSF}_m({\bf FSC}_n(\mathcal{T}_m))=
\mathcal{Q}_m\cap {\bf CSF}({\bf FSC}_n(\mathcal{T}_m))=
\mathcal{Q}_m\cap {\bf LO}_n({\bf CM}(\mathcal{T}_m)) =
{\bf LO}_n(\mathcal{Q}_m\cap {\bf CM}(\mathcal{T}_m)) =
{\bf LO}_n({\bf CM}_m(\mathcal{T}_m))$. 
\end{itemize}
\end{proof}

\end{document}